\newtheorem{theorem}{Theorem}[section]
\newtheorem{prop}[theorem]{Proposition}
\newtheorem{lemma}[theorem]{Lemma}
\newtheorem{cor}[theorem]{Corollary}
\newtheorem{conj}[theorem]{Conjecture}
\newtheorem*{conjstar}{Conjecture}
\theoremstyle{definition}
\newtheorem{defn}[theorem]{Definition}
\newtheorem{example}[theorem]{Example}
\def\Bbb{\mathbb} \def\cal{\mathcal}
\def\wt#1{\widetilde{#1}}
\def\wl#1{\overline{#1}}
\def\sier#1{{\cal O}_{#1}}
\def\C{{\Bbb C}}  \def\P{{\Bbb P}} \def\R{{\Bbb R}}
 \def\cX{{\cal X}}
\def\cA{{\cal A}} \def\cR{{\cal R}} \def\cK{{\cal K}} \def\cG{{\cal G}}
\def\vp{\varphi}
\DeclareMathOperator{\Rk }{Rank}
\DeclareMathOperator{\moda}{Mod}
\newdimen\digitwidth
\def\n-{\rule{1\digitwidth}{0.4pt}}
\begin{document}

\title{\bf Simple curve singularities}
\author{Jan Stevens}
\address{Department of Mathematical Sciences, Chalmers University of 
Technology and University of Gothenburg. 
SE 412 96 Gothenburg, Sweden}
\email{stevens@chalmers.se}

\begin{abstract}
In this paper we classify simple parametrisations of complex
curve singularities. Simple means that 
all neighbouring singularities fall in finitely many equivalence classes. 
We take the neighbouring singularities to be the ones occurring 
in the versal deformation of the parametrisation. This leads to a smaller
list  than that obtained by looking at the neighbours in a fixed space
of multi-germs. Our simple parametrisations are the same as the 
fully simple singularities of Zhitomirskii, who classified real plane and
space curve singularities. The list of simple parametrisations of plane
curves is the A-D-E list. Also for space curves the list coincides
with the lists of simple curves of Giusti and
Fr\"uhbis-Kr\"uger, in the sense of deformations of the 
curve. For higher embedding dimension no classification of simple
curves is available, but we conjecture that even there the list is
exactly that of curves with simple parametrisations. 
\end{abstract}

\maketitle

\section*{Introduction}
Curve singularities can be described by parametrisations
or by systems of equations. These two view points lead to 
different list of simple objects, with simple meaning that all neighbouring
singularities fall in finitely many equivalence classes. 
This phenomenon was already observed by Bruce and Gaffney,
who classified simple parametrisations of irreducible plane
curve singularities \cite{BrGa}.
In this setting the neighbouring singularities are to be found among 
the maps $(\C,0)\to(\C^2,0)$,
with image given by an irreducible function, 
whereas in Arnold's A-D-E
classification \cite{ar} all functions are considered.
The classifications were extended to irreducible space curves by Gibson
and Hobbs \cite{GH},  irreducible curves of any embedding dimension
by Arnold \cite{ar2} and finally to reducible curves by 
Kolgushkin and Sadykov \cite{KS} on the one hand and to 
complete intersections by Giusti \cite{gi} and  determinantal
codimension 2 singularities by Fr\"uhbis-Kr\"uger \cite{FK, FN}
on the other hand.

A more restricted definition of simpleness for parametrisations 
was given by Zhitomirskii, who introduced fully simple singularities
\cite {zh}. The idea is that 
the neighbouring singularities of multi-germs of maps
should be all curves in
the neigbourhood of the image, even those with more irreducible
components. For plane curves he finds exactly the A-D-E 
singularities, and also his list of space curves (when corrected) 
coincides with the lists of Giusti and Fr\"uhbis-Kr\"uger together.
The definition is quite natural from the point of view of a somewhat
different approach to simpleness and modality, explicitly
formulated by Wall \cite {wall}. 
Given a singularity, the neighbouring singularities
are those occurring in its versal deformation.
For contact equivalence this yields the same concept of
simpleness as the one obtained by using the space of all functions. 
For a parametrisation $\vp \colon (\wl C,\wl 0) \to (\C^n,0)$,
where $ (\wl C,\wl 0)$ is a smooth multi-germ, we can consider
deformations of the map $\vp$ (see \cite[II.2.3]{GLS}, and \cite{GC}).
We call the parametrisation \textit{simple}, if there
are only finitely many isomorphism classes in  the versal deformation
of $\vp$. A curve is fully simple in the sense of Zhitomirskii \cite{zh}
if and only
if its parametrisation is simple.

In this paper we classify simple parametrisations of any
embedding dimension, for complex map germs. 
Rather than striking the non-simple ones from the
long lists of \cite{ar2, KS} we start from scratch; it is however a good
check to compare our list with theirs. Proving simpleness is more
difficult in our context, whereas showing that a singularity is not 
simple is easier: in all cases we succeed by giving a deformation
to a confining singularity. The list of these is very simple
and contains only the $L_{n+2}^n$, the curves consisting of
$n+2$ lines through  the origin in $\C^n$. For $n=1$ and $n=2$ the
definition has to be modified (Definition \ref{lnn2}).

For a plane curve singularity every deformation of the parametrisation
gives a deformation of the image curve, but not every deformation
of the curve comes from a deformation of the parametrisation:
a necessary and sufficient condition is that the $\delta$-invariant
is constant (see \cite[II.2.6]{GLS}). Without comparing lists we prove that a 
plane curve with simple parametrisation is itself simple by showing
that a deformation to a confining singularity always can be realised by a 
deformation of the parametrisation. We use the characterisation
of simple plane curves, given by Barth, Peters and Van de Ven,
as curves without points of multiplicity four on the (reduced)
total transform in each step of the embedded resolution \cite[II.8]{BPV}.

For space curves the $\delta$-invariant can go down in
a deformation of the parametrisation. Then it is not a (flat)
deformation of the image. The simplest example is that of two intersecting
lines which are moved from each other, forming two skew lines.
In this case we have only a partial explanation of the coincidence
of the two classifications. The simple parametrisation come in infinite
series, which all are deformations of $A_k\vee L_n^n$, $D_k\vee L_n^n$
or $E_k\vee L_n^n$ (the union of a plane germ with $n$ smooth
branches in independent directions), and a finite number of sporadic
parametrisations. The sporadic curves  have $\delta\leq 5$. As 
$\delta\geq 5$ for all confining singularities, all curves with
$\delta\leq 4 $ are simple, and non-simple curves with $\delta=5$ 
have a $\delta$-constant deformation to a confining singularity.

Beyond embedding dimension three not much is known about
simpleness of curves, in the sense of deforming the image.
The curves $L_r^r$, having $\delta =r-1$ are simple \cite[7.2.8]{BuGr},
and also the curves with $\delta=r$ \cite{gr}. This follows
because $\delta-r+1$ is upper semi-continuous. 
Determining adjacencies by explicit computations
with the versal deformation seems prohibiting difficult, as may 
be seen from our computations for partition curves \cite{st-u}.
Any parametrisation of a curve of multiplicity $m$ can be deformed to
a parametrisation of $L_m^m$, but this is not true for deformations
of the image. As shown by Mumford, there exist non-smoothable 
curves, who only deform
to curves of the same type, cf.~\cite{gr}. The argument is that the 
number of moduli is too large compared to the dimension of a smoothing component.  
Our lack of knowledge is shown by the 
old unsolved question whether  rigid reduced
curve singularities exist. Such a singularity, having no nontrivial
deformations at all, is certainly simple.
But we expect them not to exist. In fact, we believe that our list 
is also the list of simple curves (for the problem of deforming the
image).

\begin{conjstar}
The simple reduced curve singularities are exactly those with
simple parametrisation.
\end{conjstar}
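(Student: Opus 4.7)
\smallskip
\noindent\textbf{Proof proposal.} The conjecture asserts an equivalence, which we address via two implications.

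For \emph{simple parametrisation implies simple curve}, the plan is to proceed through the explicit classification of simple parametrisations established in the body of the paper. For each entry one writes equations for the image in $(\C^n,0)$, computes the module $T^1$ of first-order deformations of the equations, and inspects a versal deformation. The task is then to verify that every nearby fibre is again the image of a simple parametrisation, or is a smoother curve; the adjacencies are then finite. For entries with small $\delta$ one applies Greuel's theorem quoted in the introduction: a reduced curve with $\delta$ at most its multiplicity is simple. The infinite series $A_k\vee L_n^n$, $D_k\vee L_n^n$ and $E_k\vee L_n^n$ are dispatched by induction on $k$, exploiting the expectation that the only adjacencies are to shorter members of the same or of a simpler series, to some $L_m^m$ with $m\le n$, and to smooth fibres. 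For small embedding dimensions ($n\le 3$) the statement is immediate by comparison with the classifications of Arnold, Giusti and Fr\"uhbis-Kr\"uger.

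For \emph{simple curve implies simple parametrisation}, we argue by contrapositive. If the parametrisation of a germ $X$ is not simple, the main theorem of the present paper provides a deformation of the parametrisation specialising to a confining singularity $L_{n+2}^n$ (with the modifications of Definition~\ref{lnn2} for $n\le 2$). The crucial observation is that $L_{n+2}^n$, regarded as a reduced curve singularity in $(\C^n,0)$, is patently not simple for $n\ge 2$: its equi-singular stratum is the $PGL_n$-moduli space of $n+2$ points in general position in $\P^{n-1}$, of positive dimension $n-1$. So it suffices to exhibit $L_{n+2}^n$ in the versal deformation of the \emph{equations} of $X$, whence the infinite modality transfers. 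A sufficient condition is that the deformation of the parametrisation from $X$ to $L_{n+2}^n$ can be chosen $\delta$-constant; by the criterion recalled in the introduction, a $\delta$-constant deformation of the parametrisation does induce a flat deformation of the equations.

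The hard part is precisely securing this last condition. The deformation of parametrisations supplied by the classification is not automatically $\delta$-constant, and non-$\delta$-constant parametrisation deformations do not translate into equation deformations. One must either produce an alternative $\delta$-constant deformation of $X$ to some non-simple image curve, or argue that the infinite modality of $L_{n+2}^n$ propagates to the versal base of the equations of $X$ by an upper semi-continuity argument on moduli. Bridging the two deformation theories is the essential gap preventing a full proof, and a resolution would in passing settle the question on rigid reduced curve singularities mentioned in the introduction: since the classified list of simple parametrisations contains no rigid item, the implication would exclude the existence of rigid simple curves as well.
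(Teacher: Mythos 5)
The statement you are addressing is stated in the paper as a \emph{conjecture}; the paper does not prove it, and only assembles partial evidence. Your proposal is therefore correctly calibrated: you do not claim a complete proof, and the gap you isolate at the end --- transferring a deformation of the parametrisation onto $L_{n+2}^n$ into a flat deformation of the equations when that deformation need not be $\delta$-constant --- is exactly the obstruction that keeps the statement conjectural; nothing in the paper closes it. Be aware, though, that even your ``first'' direction (simple parametrisation $\Rightarrow$ simple curve) is open in general. The paper establishes it only for plane curves (by the Barth--Peters--Van de Ven multiplicity-sequence criterion together with Wall's modality formula, not by $T^1$ computations), for space curves by comparing with the lists of Giusti and Fr\"uhbis-Kr\"uger, for all curves with $\delta\le 5$ (every confining parametrisation has $\delta\ge 5$, which disposes of all the sporadic entries), and for the members of the series with $\delta-r+1\le 2$ via upper semicontinuity of $\delta-r+1$. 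For, say, $E_8\vee L_n^n$ with $n$ large no argument is given that the \emph{image} is a simple curve, and your plan of inspecting versal deformations of the equations entry by entry is what the paper itself describes as prohibitively difficult even for partition curves.

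Two smaller points. The criterion of Greuel you invoke is that curves with $\delta\le r$, where $r$ is the number of \emph{branches}, are simple --- not $\delta$ at most the \emph{multiplicity}; since the multiplicity is at least $r$, your version is a strictly stronger, unproved claim. Second, in the contrapositive direction your computation that $L_{n+2}^n$ carries $n-1$ moduli does show that $L_{n+2}^n$ is not simple as a curve, but without $\delta$-constancy (Teissier's criterion, as cited in the paper) the adjacency of parametrisations says nothing about an adjacency of curves, so the modality does not automatically propagate to the versal base of the equations of $X$. That is the same gap seen from the other side. Your honest identification of it is the right conclusion: the proposal is a programme rather than a proof, which matches the status of the statement in the paper.
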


The contents of this paper is as follows. After defining the basic
concepts and fixing our notations we formulate our main results. 
We give the list of simple parametrisations in Section 4. The proof
of the classification is in the next Section. In Section 6
we treat plane curves,  while the final Section discusses our 
Conjecture about simple curves. The Appendix contains
equations and parametrisations for the simple space curves, together
with the names from \cite{gi} and \cite{FK}.

\section{Basic concepts}
\subsection{Simple curves and parametrisations}
We consider germs of irreducible complex curves $(C,0)$, classified
up to analytic isomorphism. 
Let $n\colon  (\wl C,\wl 0) \to (C,0)$ be the normalisation.
Here $ (\wl C,\wl 0)$ denotes a smooth multi-germ.
The $\delta$-invariant of the curve is 
$\delta(C)=\dim_\C\sier {\wl C}/\sier C$.
Given an embedding $i\colon  (C,0)\to (\C^n,0)$ the composed 
map $\vp = i\circ n\colon  (\wl C,\wl 0) \to (\C^n,0)$
is a \textit{parametrisation} of the curve.
Classifying curves is equivalent to classifying parametrisations.

We can now consider two deformation problems, that of deforming
the curve, and that of deforming the parametrisation. These are very
different problems. By a result of Teissier a deformation of the
parametrisation gives a deformation of the curve and vice versa
if and only if the $\delta$-invariant is constant (see \cite[II.2.6]{GLS}).
In a deformation of the curve the number of components can go down:
a simple example is the deformation of $A_3$ into $A_2$, given 
by $y^2=x^4+s x^3$. In a deformation of the parametrisation the
number of components cannot decrease. The simplest example
of  deformation of the parametrisation which does not give a 
deformation of the image curve, is the deformation of $A_1\subset
\C^3$, which pulls apart the two lines. The first branch is 
parametrised by $(x,y,z)=(t_1,0,0)$, while the second is
$(x,y,z)=(0,t_2,s)$.
The ideal $I$ of the  image needs four generators:
\[
I=\bigl(yx,zx,y(z-s),z(z-s)\bigr)\;.
\]
For $s=0$ the ideal defines the two intersecting lines together with an
embedded component at the origin. 

Given a deformation problem, suppose that every object $X$
has a versal deformation $\cX\to S$.
\begin{defn}\label{defsimple}
An object $X$ is \textit{simple} if  there occur only finitely many
isomorphism classes in the versal deformation  $\cX\to S$.
\end{defn}
So an object is simple if it has no moduli and it also does not deform
to objects with moduli.
\begin{defn}
A collection of objects forms a collection of  \textit{confining}
objects, if no object of the collection is simple, and every 
other non-simple object deforms into one of the objects of the
collection.
\end{defn}

In particular, the two deformation problems for curve singularities
give two notions of simpleness. We will refer to the simple objects as
\textit{simple parametrisations}, and \textit{simple curves} respectively.

\subsection{$\cA$-simple map germs}
The first results on simple curve singularities were obtained
by Bruce--Gaffney \cite {BrGa}, for irreducible plane curve singularities,
using a different concept of simpleness obtained by considering 
parametrisations in a fixed space of multi-germs.
In fact for any of Mather's groups $\cR$, $\cK$ and $\cA$ (say $\cG$) 
one can define the notion of
a $\cG$-simple map germ $(k^n, 0)\to (k^p,0)$,  where $k$ is $\R$ or 
$\C$: a germ is \textit{$\cG$-simple}, if all neighbouring singularities
in the space of map germs  $(k^n, 0)\to (k^p,0)$  fall into finitely
many $\cG$-equivalence classes.

A parametrisation of an irreducible complex plane curve singularity
is a map 
germ $\vp\colon(\C, 0)\to (\C^2,0)$. Two such map germs $\vp_1$
and $\vp_2$ are $\cA$-equivalent if and only  defining equations
$f_1$ and $f_2$ for their images are $\cK$-equivalent, but 
$\cA$-simpleness of $\vp$ is not equivalent to 
$\cK$-simpleness of a defining equation $f$.
\begin{example}\label{AKexample}
The germ $\vp(t)=(t^4,t^5)$ is $\cA$-simple \cite[Theorem 3.8]{BrGa}
but its defining equation $f=y^4-x^5$ is unimodal; it 
is $W_{12}$ in Arnold's notation. The function $f$ has a
deformation $F(x,y,s)=y^4-x^5+s(x^2y^2+x^4)$ to $X_9$.
This deformation can be parametrised as 
$\Phi(t,s)=(t^4+s(t^2+1),t^5+s(t^3+t))$, but as germ at the origin
$\vp_s(t)=\Phi(t,s)$ is an immersion for $s\neq 0$.
\end{example}
Bruce and Gaffney call an irreducible  function germ 
$f\colon (\C^2,0)\to (\C,0)$ \textit{irreducible $\cK$-simple}
if all neighbouring irreducible functions fall into finitely
many $\cK$-equivalence classes.
The parametrisation of a curve, which is  irreducible $\cK$-simple,
is $\cA$-simple.
The confining singularities for irreducible plane curve singularities 
are those with Puiseux pairs $(4,9)$ and $(5,6)$. 
All irreducible curves below these ones have only finitely many
$\cK$-orbits, so are therefore  irreducible $\cK$-simple.
The complete list consists $A_{2k}$, $E_{6k}$,  $E_{6k+2}$,
$W_{12}$, $W_{18}$ and $W^{\#}_{1,2q-1}$. In particular, the
list of $\cA$-simple parametrisations coincides with that of 
 irreducible $\cK$-simple functions.

The classification of $\cA$-simple curves was extended to space curves by
Gibson--Hobbs \cite{GH}
and  by Arnol'd \cite{ar2} to  irreducible curves of arbitrary embedding 
dimension and finally to reducible curves
by Kolgushkin--Sadykov \cite{KS}. The lists become rather long.

The other possibility  in the situation of Example \ref{AKexample}
is to change the concept of simpleness for parametrisatrions.
This approach was taken by Zhitomirskii \cite{zh}.
His definition of fully simple singularities involves arcs.

\begin{defn}
An \textit{arc} $F\colon [a,b] \to \R^n$ is said to \textit{represent}
a multi-germ 
\[
\gamma\colon \coprod_{i=1}^r(\R_{(i)},0) \to (\R^n,0)
\]
if the multi-germ  $(F,F^{-1}(0))$ is $\cA$-equivalent to $\gamma$.
Here we assume that  the image of $F$ contains the origin,
and that  the endpoints $F(a)$ and $F(b)$
are different from the origin. 
\end{defn}

\begin{defn}
A multi-germ $\gamma$ of a parameterized curve in 
$\R^n$ is \textit{fully simple} is there exists an arc 
$F\colon [a,b] \to \R^n$ representing $\gamma$ such that 
the  singularities of all arcs in a neighbourhood of $F$
at all points of their images sufficiently close to the origin
belong to finitely many equivalence classes.
\end{defn}

This definition extends in a natural way to complex parametrisations.
It is convenient to represent a reducible curve by a finite number of
arcs.
A nearby fibre in a good representative of the germs of the versal
deformation of a parametrisation  gives a finite collections of
complex arcs. The versal deformation contains representatives
for the isomorphism classes of all neighbouring arcs.
Therefore the simple complex parametrisations, in the sense of Definitiion
\ref{defsimple} are the complex fully simple parametrised curves
of Zhitomirskii \cite{zh}.

\subsection{Stably equivalent parametrisations}
In a deformation of the parametrisation the embedding dimension
can increase. 
Therefore the collection of confining singularities 
depends on the chosen target dimension for the parametrisation
we start with. Two parametrisations which only differ in target
dimension are called \textit{stably equivalent} \cite{ar2}.
A parametrisation is \textit{stably simple}  if all stably equivalent
parametrisations are simple.

\begin{lemma}
A simple parametrisation is stably simple.
\end{lemma}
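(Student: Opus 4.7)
The plan is to relate a versal deformation of $\tilde\vp=(\vp,0,\dots,0)$ to one of $\vp$. Let $\Phi\colon\wl C\times S\to\C^n\times S$ be the versal deformation of $\vp$; by simpleness, only finitely many $\cA$-classes occur among its fibres. I would first observe that the tangent space to deformations of $\tilde\vp$ decomposes naturally as
\[
T^1_{\tilde\vp}\;\cong\;T^1_\vp\oplus(\sier{\wl C}/\sier C)^k,
\]
where each copy of $\sier{\wl C}/\sier C$ corresponds to first-order perturbations in one of the additional target coordinates, modulo the target coordinate changes that absorb elements of $\vp^*\sier{\C^n}=\sier C\subset\sier{\wl C}$. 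Correspondingly, a versal deformation $\tilde\Phi$ of $\tilde\vp$ can be constructed over a base $\tilde S\supseteq S$ by augmenting $\Phi$ with $k\delta$ new parameters: fixing representatives $g_1,\dots,g_\delta$ of $\sier{\wl C}/\sier C$, one sets $\tilde\Phi=(\Phi,\sum_j\lambda_{1j}g_j,\dots,\sum_j\lambda_{kj}g_j)$.

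The crux is to show that every fibre $\tilde\Phi_{(s,\lambda)}\colon\wl C\to\C^{n+k}$ is $\cA$-equivalent to a stabilisation $(\Phi_{s'},0,\dots,0)$ of some fibre of $\Phi$. For this I would argue that the image of $\tilde\Phi_{(s,\lambda)}$ has embedding dimension at most that of the image of $\tilde\vp$, which equals the embedding dimension of $\vp(\wl C)$ and hence is at most $n$. Given this, a suitable target coordinate change in $\C^{n+k}$ moves the image into the standard subspace $\C^n\subset\C^{n+k}$, transforming $\tilde\Phi_{(s,\lambda)}$ into $(\psi,0,\dots,0)$ for some parametrisation $\psi\colon\wl C\to\C^n$; and $\psi$, being a small deformation of $\vp$, is $\cA$-equivalent to a fibre of $\Phi$ by versality.

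Combined with the elementary fact that $(\psi,0)$ and $(\psi',0)$ are $\cA$-equivalent in $\C^{n+k}$ if and only if $\psi$ and $\psi'$ are $\cA$-equivalent in $\C^n$, this yields an injection of the set of $\cA$-classes appearing in the versal deformation of $\tilde\vp$ into the set appearing in the versal deformation of $\vp$. Since the latter is finite, so is the former. The main obstacle is justifying the semicontinuity of the embedding dimension of the image under a deformation of a parametrisation (which need not be a flat deformation of the image); one workaround is to argue directly that the extra summand of $T^1_{\tilde\vp}$ parametrises deformations that pull apart branches of $\wl C$, an effect which is already realised within the versal deformation of $\vp$ by varying $s\in S$.
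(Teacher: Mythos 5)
Your reduction hinges on the claim that every fibre of the versal deformation of $\tilde\vp=(\vp,0,\dots,0)$ is, up to stabilisation, already a fibre of the versal deformation of $\vp$; this claim is false, so the injection of $\cA$-classes you want does not exist. The step you yourself flag as the main obstacle --- semicontinuity of the embedding dimension of the image under a deformation of the parametrisation --- genuinely fails, and its failure is precisely why stable simpleness is a nontrivial notion. Concretely, take $\vp$ to be the parametrisation of $D_4=L_3^2\subset\C^2$ (three concurrent lines) and $k=1$. Perturbing the third coordinate of one branch by $\lambda t$, i.e.\ moving in the extra summand $\sier{\wl C}/\sier{C}$ of $T^1_{\tilde\vp}$, deforms the image into $L_3^3$, the coordinate axes in $\C^3$, of embedding dimension $3>2$. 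This deformation does not pull any branches apart (all three lines still pass through the origin), so your proposed workaround fails as well; and $L_3^3$ cannot occur in the versal deformation of $\vp$ as a map to $\C^2$, since every fibre there has planar image. Thus stabilisation genuinely produces new neighbouring isomorphism classes, and the actual content of the Lemma is that these new classes are nonetheless finite in number --- something your argument, which would prove that no new classes appear at all, cannot deliver.

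The paper argues in the opposite direction: if $\tilde\vp$ were not simple, it would deform inside $\C^{n+k}$ to a parametrisation with moduli, which by Theorem \ref{confine} may be taken to be a confining singularity $L_{m+2}^m$; composing the whole family with a generic linear projection $\pi\colon\C^{n+k}\to\C^n$ gives a deformation of $\vp$ itself, and the generic projection of $L_{m+2}^m$ still has moduli (the cross-ratios of the projected lines persist), contradicting simpleness of $\vp$. To salvage your approach you would have to show directly that the curves obtained from the finitely many fibres of $\Phi$ by perturbations supported in the summands $(\sier{\wl C}/\sier{C})^k$ still fall into finitely many classes, which amounts to redoing the classification; the projection argument avoids this by only ever needing to control the confining singularities.
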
 
\begin{proof}
Suppose a simple parametrisation $\vp\colon (\wl C,\wl 0)\to (\C^n,0)$ 
deforms with higher target dimension into a parametrisation with moduli, 
so there exist a family $\psi_s \colon (\wl C,\wl 0)\to (\C^{n+k},0)$ with 
moduli. For a generic projection $\pi\colon (\C^{n+k},0) \to (\C^n,0)$
the family $\pi\circ\psi_s$ is a deformation of $\vp$. One expects
a generic projection of a singularity to have more moduli than the  
singularity itself, so $\pi\circ\psi_s$ has moduli, contradicting that
$\vp$ is simple. It suffices to prove this for the confining singularities
for stable simpleness. By Theorem \ref{confine} they are the 
curves  $L_{n+2}^n$ of Definition \ref{lnn2}, 
and for them the expectation is indeed true.
\end{proof}

We classify stably simple parametrisations. The Lemma justifies that
we speak only of simple parametrisations and drop the word \lq stable\rq.
We always consider a curve as embedded in $(\C^N,0)$ 
for $N$ large enough, except in the section on plane curves.

\section{Notations}
\subsection{Curves with smooth branches}
\begin{defn}
A curve singularity $C=C_1\cup C_2$ is \textit{decomposable}
if the curves $C_1$ and $C_2$ lie in smooth spaces intersecting
each other transversally in one point, the singular point of $C$.
We write $C=C_1\vee C_2$.
\end{defn}

We write $C\vee L$ for the wedge of $C$ with a smooth branch.

\begin{defn}
The curve $L_n^n=L\vee\dots\vee L\subset \C^n$ is the curve
isomorphic to the singularity consisting of the coordinate axes in 
$\C^n$. The curve $L_{n+1}^n$, $n\geq 2$ is the curve consisting
of $n+1$ lines in $\C^n$ through the origin in general position,
meaning that each subset consisting of $n$ lines span $\C^n$.
\end{defn}

Note that $L_3^2$ is the
plane curve singularity $D_4$. 

Points in projective space are in generic position if each subset
imposes independent conditions on hypersurfaces of each degree
\cite{gr}. The curve $L_{n+2}^n$, which is the cone over $n+2$
points in generic position in $\P^{n-1}$, has $\mu =\delta +2$, if
$n\geq 3$. But the singularity $\wt E_7$, four lines though the
origin, has $\mu =\delta +3$. There exists a curve with the same
tangent cone, having $\mu =\delta +2$; we lift one branch out of the
plane. Let $\wt E_7$ be given by $xy(x-y)(x-\lambda y)=0$.
We take the same first three lines, but parametrise the last one 
as $(x,y,z)= (\lambda t,t,t^2)$. The equations are determinantal:
\begin{equation}\label{l42}
\Rk
\begin{pmatrix}
z  & \lambda (x-y)  & y(x-y)\\
0  & x-\lambda y &    z-y^2
\end {pmatrix}
\leq 1\;.
\end{equation}
We will call this curve $L_4^2$. 
As it is not a complete intersection, there is no deformation from
$\wt E_7$, but there is a deformation of the parametrisation.

The curve $L_3^1$ consists of three smooth branches with common 
tangent. The plane curve $\wt E_8\colon x(x-y^2)(x-\lambda y^2)$
has $\mu =\delta +3$. We can again lift one branch out of the plane
and parametrise $(x,y,z)=(\lambda t^2,t,t^3)$.
Equations are 
\begin{equation}\label{l31}
\Rk
\begin{pmatrix}
z  & \lambda (\lambda-1)y  & \lambda x\\
0  & x-\lambda y^2 &   \lambda z-xy
\end {pmatrix}
\leq 1\;.
\end{equation}
If we lift the line further out of the plane, as $(x,y,z)=(\lambda t^2,t,t^2)$,
the coefficient of $t^2$ in $x$ can be transformed into 1,
and we get the simple curve denoted $J_{2,0}(2)$ by
Fr\"uhbis-Kr\"uger \cite{FK} and denoted $S_3^t$ in \cite{st-c}.
The difference between the curves $S_3^t$ and $L_3^1$
can be seen from the 2-jet of the parametrisation.
Following \cite{zh} we
say that the 2-jet $j^2\vp$ is \textit{planar} if the image of $\vp$
lies modulo terms of third order on a smooth surface.

\begin{defn}\label{lnn2}
The curve $L_{n+2}^n$ is for $n\geq 3$ the curve consisting of 
$n+2$ lines through the origin in generic position in $\C^n$, 
the curve $L_4^2$ is the curve with equations (\ref{l42})
and $L_3^1$ the curve with equations (\ref{l31}).
\end{defn}

\subsection{Notation for singular curves}
We will denote monomial curves by their semigroup, so the curve
$Z_{10}\colon z^2+yx^2=y^2+x^3=0$ of \cite{gi} is $(4,6,7)$.
Plane curves $(2,2k+1)$ are mostly referred to by their name $A_{2k}$.
Also for the monomial curve of minimal multiplicity 
$(k,k+1,\dots,2k-1)$ with $\delta=k-1$ we use a special name $M_k$.
We extend this notation to quasi-homogeneous reducible curves
by writing the exponents of the parameter. The union of curves is indicated by a plus sign. If for some coordinate
function $z_i=\vp_i(t)=0$, we write a dash. For example, the
curve $S_3^t=J_{2,0}(2)$ described above is notated
$(1,\n-,\n-)+(1,2,\n-)+(1,\n-,2)$.  

\subsection{Notation for adjacencies}
The name or symbol denotes both a curve and its parametrisation.
There are two types of adjacencies, for deformations of the 
parametrisation and for deformations of the image curve.
An adjacency, which can be obtained in both ways, is denoted
by the usual arrow $\to$, an adjacency of parametrisations
by an arrow $\rightharpoondown$ or a rotated version, if the 
arrow is written in an other direction, while an arrow
$\rightharpoonup$ is for adjacencies of curves only.

\section{Main results on parametrisations}
With the notations introduced  above we can formulate 
our classification result.

\begin{theorem}\label{thmclass}
The infinite series of curves 
$A_k\vee L_n^n$, $D_k\vee L_n^n$ and
$E_k\vee L_n^n$ and
the sporadic curves $(5,6,7,8)$, $(4,6,7)$,  $(2,3,\n-,\n-)+(\n-,4,5,3)$,
and $(4,5,7)\vee L$ have simple parametrisations. 
Any other simple parametrisation occurs in the versal deformation
of one of these parametrisations.
\end{theorem}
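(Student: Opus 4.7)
The plan is to establish the theorem by proving two complementary inclusions: every parametrisation on the list is simple, and every simple parametrisation occurs in the versal deformation of one of the listed parametrisations. Equivalently, any parametrisation not among the listed ones or their specialisations admits a deformation to a confining singularity $L_{n+2}^n$, hence is non-simple. Throughout I would use the characterisation of the confining singularities as the curves $L_{n+2}^n$ provided by the preceding Theorem~\ref{confine}.

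For the non-simpleness direction I would organise the analysis by multiplicity $m$ and embedding dimension $n$, treating each possible configuration of tangent cones and low order initial jets. For each candidate parametrisation $\vp$ not on the list, I would exhibit an explicit one-parameter family $\vp_s$ with $\vp_0 = \vp$ whose generic member realises, or deforms further to, $L_{n+2}^n$. The guiding heuristic is that a parametrisation is non-simple whenever it has either a component with too high a Puiseux pair complexity relative to its $\delta$-invariant, or too many branches sharing a common tangent line or tangent plane: in both situations one can independently perturb the branches to produce $n+2$ directions in general position.

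For the simpleness direction I would handle the three infinite series $A_k \vee L_n^n$, $D_k \vee L_n^n$, $E_k \vee L_n^n$ and the four sporadic singularities separately. For the series, I would proceed by induction on $k+n$ (or on $\delta$): every deformation of the parametrisation either unfolds the planar ADE part within its own versal deformation, producing lower-index ADE curves, or separates components of the bouquet by replacing the wedge point with skew branches and lowering $\delta$, or jointly deforms the planar part and the smooth branches in independent directions. In each case the resulting parametrisation is again on the list with strictly smaller invariants, so the induction closes. The sporadic cases $(5,6,7,8)$, $(4,6,7)$, $(2,3,\n-,\n-)+(\n-,4,5,3)$ and $(4,5,7)\vee L$ are then handled by direct inspection of their versal deformations, cross-checked against the $\cA$-simple germs in the Arnold--Kolgushkin--Sadykov lists \cite{ar2,KS}.

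The main obstacle I foresee lies in the simpleness proof for the sporadic curves and for the base cases of the infinite series, where one must rule out deformations of the parametrisation that introduce moduli, including the subtle deformations that decrease $\delta$ --- the wedge-point splitting phenomenon illustrated in the introduction by the skew-line deformation of $A_1 \subset \C^3$. Controlling these requires either an explicit semi-universal deformation computation, which is tractable in low embedding dimension, or a careful bookkeeping argument tracking $\delta$ and the tangent cone under deformation. The non-simpleness arguments are by contrast largely routine once the correct case division is in place, since they reduce to writing down specific perturbations hitting $L_{n+2}^n$.
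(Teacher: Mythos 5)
Your top-level architecture (every non-listed parametrisation deforms to a confining singularity; the listed ones do not, hence the list is closed under adjacency and consists of simple objects) matches the paper's, but two of your concrete suggestions would not work and the substantive content is missing. First, you propose to take Theorem~\ref{confine} as an input; in the paper that theorem is proved \emph{simultaneously} with the classification, as a byproduct of determining which parametrisations do not deform into $L_{n+2}^n$, so you cannot simply cite it. Second, and more seriously, your plan for the hard half --- simpleness of the maximal elements --- rests on ``direct inspection of their versal deformations, cross-checked against the $\cA$-simple germs in the Arnold--Kolgushkin--Sadykov lists.'' The paper's Example~\ref{AKexample} shows exactly why this fails: $(t^4,t^5)$ is $\cA$-simple, yet its parametrisation is not simple in the present sense, because the relevant neighbouring objects are multi-germs at all preimages of nearby singular points of the image, not germs at the origin in a fixed source--target space. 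Membership in the $\cA$-simple lists is at best necessary, never sufficient, and the paper explicitly declines to argue by pruning those lists. Likewise your induction step for the series (``every deformation is again on the list with strictly smaller invariants'') asserts the conclusion rather than proving it: the actual task is to \emph{rule out} a deformation of, say, $E_8\vee L_n^n$ or $(4,5,7)\vee L$ to some $L_{k+2}^k$, and ``bookkeeping of $\delta$ and the tangent cone'' does not by itself do this.

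What is missing are the specific non-existence arguments that carry the proof. For $(5,6,7,8)$ and $(4,6,7)$ the paper observes that a deformation to $L_4^2$ or $L_3^1$ would be $\delta$-constant, hence a deformation of the image, which is impossible because these curves are Gorenstein while $L_4^2$ and $L_3^1$ are not. For $(4,5,7)\vee L$, $E_8\vee L_n^n$, $(2,3,\n-,\n-)+(\n-,4,5,3)$ and $D_{k+3}\vee L_n^n$ one projects onto the plane or $3$-space of the singular component and derives a contradiction from intersection multiplicities or from an explicit congruence on the components of the deformed parametrisation (e.g.\ $A\vp_1+B\vp_2+C\vp_3\equiv 0 \bmod (t-a_s)^2(t-b_s)^2(t-c_s)^2$ forcing $A=B=0$). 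On the non-simpleness side the paper does not argue configuration by configuration from a heuristic, but from a short list of generating adjacencies ($C_1\cup C_2\rightharpoondown C_1\vee C_2$, deformation to $M_m$ and its splittings, $A_2\vee L\to A_3$, $A_3\vee A_3\to L_4^2$, $M_6\to A_2\vee A_2\vee L_2^2$) which immediately bound the multiplicity by $5$ and the number of singular components by $2$, after which finitely many normal forms remain. Without these ingredients your proposal is a plausible outline but not a proof.
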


A complete list of simple parametrisations is given in the next section.
In the course of the classification we also determine the
confining singularities, thereby proving (in the complex case)
Conjecture A1 of
Zhitomirskii \cite{zh}. 

\begin{theorem}\label{confine}
The confining singularities for deformations of parametrisations
are the curves $L_n^{n-2}$ from Definition {\rm \ref{lnn2}}.
\end{theorem}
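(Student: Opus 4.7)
The plan is to establish two things: first, each $L_n^{n-2}$ is itself non-simple, and second, every other non-simple parametrisation admits a deformation onto one of the $L_n^{n-2}$.

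For the first point I would exhibit the moduli directly. For $n\geq 5$ the curve $L_n^{n-2}$ is the cone over $n$ points in general position in $\P^{n-3}$, and the dimension count $n(n-3)-((n-2)^2-1)=n-3\geq 2$ exhibits that many effective cross-ratio moduli. For $L_4^2$ and $L_3^1$, which are the lifted versions of $\wt E_7$ and $\wt E_8$ introduced in Definition \ref{lnn2}, I would check that the cross-ratio modulus of the plane curve survives the lift: in the parametrisations $(\lambda t,t,t^2)$ respectively $(\lambda t^2,t,t^3)$ the parameter $\lambda$ cannot be absorbed by an $\cA$-equivalence of the multi-germ, because the linear action on the target is already used up to normalise the first three branches.

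For the second point my strategy is to leverage the classification of Theorem \ref{thmclass}. Given a non-simple parametrisation $\vp\colon(\wl C,\wl 0)\to(\C^N,0)$ that is not $\cA$-equivalent to (a deformation of) one in the simple list, I would produce an explicit deformation of $\vp$ to some $L_n^{n-2}$. The natural recipe is to separate tangent directions: if $\vp$ has $n$ branches meeting at the origin, one deforms the higher-order terms of the branches so that the tangent lines move into general position. Stability (allowing the embedding dimension to grow) lets one use auxiliary coordinates to realise this as a genuine deformation of $\vp$, arriving at the generic configuration of $n$ lines, which is exactly $L_n^{n-2}$ once the embedding dimension reaches $n-2$. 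For curves of small multiplicity but complicated jets one instead runs an induction, peeling off a single branch at a time until the remaining configuration is a union of lines in general position.

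The main obstacle is that the proof is in practice interlocked with the classification carried out in Section 5: one cannot cleanly separate the statement ``this is the list of simple parametrisations'' from ``here are the confining singularities.'' Every time a parametrisation is shown to be non-simple during the classification, the witness is constructed as an explicit deformation to one of the $L_n^{n-2}$; this is precisely what ensures both that the list of confining singularities is complete and that no smaller collection would suffice. The proof of Theorem \ref{confine} therefore amounts to the non-simpleness verification of the previous paragraph together with careful bookkeeping throughout the classification proof, recording for each excluded case which $L_n^{n-2}$-adjacency is produced.
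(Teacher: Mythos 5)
Your overall architecture --- (i) the curves $L_n^{n-2}$ carry moduli, (ii) every non-simple parametrisation is shown, case by case inside the classification, to deform to one of them --- is the same as the paper's, which proves Theorems \ref{thmclass} and \ref{confine} simultaneously by sorting all parametrisations according to whether or not they deform into some $L_n^{n-2}$. Your moduli counts for part (i) are fine (the $n-3$ parameters for $n\geq 5$, and the surviving cross-ratio $\lambda$ for $L_4^2$ and $L_3^1$) and supply what the paper leaves implicit.

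The gap is in your ``natural recipe'' for part (ii). Moving the tangent lines of the $n$ branches of $\vp$ into \emph{general} position in a large ambient space produces $L_n^n$ (or $L_n^{n-1}$), and these are \emph{simple}; the confining configuration $L_n^{n-2}$ requires the deformed branches to remain in the special position of spanning only an $(n-2)$-plane, which is exactly what a generic separation of tangents destroys. Moreover your recipe only ever sees the branches of $\vp$ itself, whereas most of the adjacencies actually needed split a single branch into several smooth ones meeting at a common point of the deformed image: for instance an irreducible curve of multiplicity $5$ with $v(\vp_4)>10$ is sent to $L_5^3$ by perturbing $\vp_1,\vp_2,\vp_3$ with terms divisible by $t^5-s$ and the components $\vp_j$, $j\geq 4$, with terms divisible by $(t^5-s)^2$, so that the resulting multi-germ has five branches whose tangents span only a $3$-space. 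No uniform ``separate the tangents'' or ``peel off one branch at a time'' procedure yields such witnesses; each excluded case requires a deformation tailored to its specific degeneracy (several tangents in a too-small subspace, too-high contact orders, and so on). Your closing paragraph correctly identifies that the real content is this case-by-case bookkeeping, but since the proposal supplies neither the case list nor the tailored deformations, part (ii) is not actually established.
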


The list of simple parametrisations shows that also
Conjecture B1 of Zhitomirskii \cite{zh} is true:

\begin{cor}
The curve singularities with simple parametrisations are
quasi-homogeneous.
\end{cor}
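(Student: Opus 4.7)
The corollary will follow by direct inspection of the complete list of simple parametrisations to be given in Section 4. By Theorem \ref{thmclass}, every such parametrisation is either a member of one of the series $A_k\vee L_n^n$, $D_k\vee L_n^n$, $E_k\vee L_n^n$, or one of the four sporadic heads $(5,6,7,8)$, $(4,6,7)$, $(2,3,\n-,\n-)+(\n-,4,5,3)$, $(4,5,7)\vee L$, or else a curve occurring in the versal deformation of one of these. Since there are only finitely many simple parametrisations below each head, the task reduces to a finite enumeration of cases, each to be checked individually.

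For the A-D-E series, each plane curve $A_k$, $D_k$, $E_k$ is quasi-homogeneous in its standard weights, while $L_n^n$, as the union of the coordinate axes in $\mathbb{C}^n$, is monomial with coordinate weights $(1,\ldots,1)$. A decomposable curve $C_1\vee C_2$ lies in two transversal smooth subspaces meeting only at the singular point, so the two weight systems extend independently to the ambient space and combine into a single quasi-homogeneous grading on the union; this handles every wedge in the list. For the sporadic curves, the irreducible ones $(5,6,7,8)$, $(4,6,7)$ and $(4,5,7)\vee L$ are monomial, so they are quasi-homogeneous with weights equal to the semigroup generators (together with weight $1$ for the extra smooth branch in the wedge case). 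The reducible sporadic $(2,3,\n-,\n-)+(\n-,4,5,3)$ admits compatible weights as well: setting $w(t_1)=4$ and $w(t_2)=3$ yields coordinate weights $(8,12,15,9)$ under which both branches, hence their union, are quasi-homogeneous. The remaining simple parametrisations that arise as deformations of the heads are handled analogously, being either monomial or obtained from a head by removing a branch or lowering a semigroup generator.

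The main point to guard against is that no non-quasi-homogeneous singularity slips in via the confining singularities $L_n^{n-2}$ of Theorem \ref{confine}. For $n\geq 5$ these are cones over $n$ points in general position in $\mathbb{P}^{n-3}$ and thus manifestly quasi-homogeneous, whereas the borderline curves $L_3^1$ and $L_4^2$ of Definition \ref{lnn2} fail to be quasi-homogeneous, since the lifted branch carries a weight incompatible with the planar ones. But Theorem \ref{confine} identifies precisely these $L_n^{n-2}$ as the confining singularities, so they are non-simple and therefore absent from the list of simple parametrisations. Once this is noted, the case-by-case verification closes, proving the corollary.
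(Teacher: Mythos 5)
Your approach --- verifying quasi-homogeneity by direct inspection of the classification list --- is exactly the paper's: the corollary is stated there as an immediate consequence of the list, with no further argument, so your explicit weight assignments (for the wedges, and the weights $(8,12,15,9)$ for $(2,3,\n-,\n-)+(\n-,4,5,3)$) actually supply more detail than the original. One genuine error, though it does not affect the conclusion: your claim that $L_3^1$ and $L_4^2$ fail to be quasi-homogeneous is false. The curve $L_4^2$ is invariant under the $\C^*$-action with weights $(1,1,2)$ on $(x,y,z)$ --- the three planar lines are preserved and the lifted branch $(\lambda t,t,t^2)$ is weighted-homogeneously parametrised, as one also sees from the entries of the matrix in (\ref{l42}) --- and similarly $L_3^1$ is quasi-homogeneous for the weights $(2,1,3)$. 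Since these curves are confining, hence not simple and absent from the list, the corollary is unaffected, but that sentence should be corrected or deleted; the relevant point is only that every curve actually appearing in the list admits consistent weights, which your remaining (admittedly somewhat compressed) case check establishes.
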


\section{List of simple parametrisations}
We list the curves together with some adjacencies.
These are by no means all adjacencies, but we rather use them to 
organise the list. We start with the sporadic curves.

\subsection{Sporadic  curves}
For all curves listed the $\delta$-invariant satisfies $\delta\leq 5$.
In each case the most singular curve has $\delta=5$ and an adjacency
of parametrisations and image curves (given by an arrow $\leftarrow$
or $\downarrow$)
is $\delta$-constant, while the other adjacencies lower $\delta$ by one.
\subsubsection{Irreducible curves}\label{unibranch}
There are eight unibranch sporadic curves.
\[
\begin{matrix}
&&&&(5,6,7,8,9)& \leftharpoonup &(5,6,7,8)\\
&&&&\downarrow&&\downharpoonleft\\
 (4,5,6,7) &\leftharpoonup  &  (4,5,6) & \leftarrow &(4,5,7) 
       & \leftarrow &(4,6,7,9) &\leftharpoonup  &    (4,6,7)\\
&&&&&&\downarrow\\
&&&&&&(3,7,8) 
\end{matrix}
\]
\subsubsection{One branch of multiplicity four and a line}
\label {multfour}
\[
\begin{matrix}
&&(4,5,6,7)+(\n-,\n-,1,\n-) & \leftarrow &(4,5,7)\vee L \\
&&\downarrow&&\downarrow\\
(4,5,6,7)\vee L &\leftharpoonup  &  (4,5,6,7)+(\n-,\n-,\n-,1)  & \leftarrow
             &(4,5,6)\vee L 
\end{matrix}
\]
\subsubsection{One branch of multiplicity three and a cusp}
\label{A2M3}
\[
\begin{matrix}
A_2\vee M_3 &\leftharpoonup  &  (2,3,\n-,\n-)+(\n-,5,4,3)  & \leftarrow
             &(2,3,\n-,\n-)+(\n-,5,4,3)  
\end{matrix}
\]
\subsubsection{Two cusps and a line}
\label{twocuspsline}
\[
\begin{matrix}
&&\begin{gathered}(2,3,\n-,\n-)+(\n-,3,2,\n-)\\{}+(\n-,\n-,\n-,1) 
\end{gathered}
    & \leftarrow &
\begin{gathered}(2,3,\n-,\n-)+(3,\n-,2,\n-)\\{}+(\n-,\n-,\n-,1)\end{gathered}  \\
&&\downarrow&&\downarrow\\
A_2\vee A_2\vee L &\leftharpoonup  &  
\begin{gathered}(2,3,\n-,\n-)+(\n-,\n-,3,2)\\{}+(\n-,1,1,\n-) \end{gathered} 
& \leftarrow  
&\begin{gathered}(2,3,\n-,\n-)+(\n-,\n-,3,2)\\{}+(1,\n-,1,\n-) \end{gathered}
\end{matrix}
\]
 \subsubsection{The union of two $A_k$-singularities}
\label{twoAk}
 
\[
\begin{matrix}
A_2\vee A_4&\leftharpoonup  & 
(2,3,\n-)+(\n-,5,2)  &&\qquad (2,3,\n-)+(2,\n-,3) \\
\downarrow&&\downarrow&&\qquad \downharpoonleft\\
A_2\vee A_3&\leftharpoonup&(2,3,\n-)+(\n-,\n-,1)+(\n-,2,1)& 
& \qquad (2,3,\n-,\n-)+(2,\n-,3,4)\\
\downharpoonright&&\downharpoonright&&\swarrow\\
A_2\vee A_2&\leftharpoonup  &  (2,3,\n-)+(\n-,3,2) & \leftarrow
             &  (2,3,\n-)+(\n-,2,3) \qquad
\end{matrix}
\]
\subsubsection{Other sporadic curves}
\[
\begin{matrix}
(3,4,5,\n-)+(1,\n-,\n-,2) &\leftharpoonup &(3,4,5)+(1,\n-,\n-) \\
\downarrow&&\downharpoonleft\\
(1,\n-,\n-)+(1,2,\n-)+(1,\n-,2) &\leftarrow &(2,5,\n-)+(1,\n-,2) 
\end{matrix}
\]

\subsection{Infinite series, of the form $C\vee L^k_k$}
All singularities in this part of the list are related to $A_k$, $D_k$
or $E_k$. We have therefore series of series and individual series.
A series is of the form $C\vee L_k^k$ with $C$ indecomposable. 
Here we allow $k=0$
and interpret $L_0^0$ as point, so  $C\vee L_0^0$ is just the curve
$C$ itself.
We list below only the indecomposable curves $C$. The only curve not of 
this form is the simplest of all, the totally decomposable curve
$L_n^n$. This curve is singular if $n\geq 2$,
with $L_2^2=A_1$. 
We include $L_n^n$ by including $A_1$ in the list, even though it is 
decomposable.

\subsubsection{Indecomposable curves of type $E$ and deformations}
\label{typeE}
\[
\begin{matrix}
(3,4,5)  & \leftharpoonup & E_6\colon(3,4)
      & \leftarrow & \quad(3,5,7) \quad& \leftharpoonup &E_8\colon(3,5)\\
&&&&\downarrow&&\downarrow\\
&&&&\hphantom{\quad(3,5,7) \quad}\llap{$(2,3,\n-)+(1,\n-,2)$}& \leftharpoonup &E_7\colon(2,3)+(1,\n-)
\end{matrix}
\]
\subsubsection{Deformations of $E_k\vee L_{n-2}^{n-2}$}
Here $n$ is the embedding dimension, which has to satisfy $n\geq3$.
From $E_8$ and $E_6$ we get
\begin{align*}
(3,5,7)\vee L_{n-3}^{n-3} &+ (\n-,\n-,1,\dots,1) \\ %% E_8\vee L_n^n
&\downharpoonleft\\
(3,4,5)\vee L_{n-3}^{n-3} &+ (\n-,1,\n-,1\dots,1) \\ %% E_8\vee L_n^n
&\downarrow\\ 
(3,4,5)\vee L_{n-3}^{n-3} &+ (\n-,\n-,1,\dots,1)   
\end{align*}
and from $E_7$ 
\[
A_2\vee L_{n-2}^{n-2} + (1,\n-,1,\dots,1) %% E_7\vee L_n^n
\leftharpoonup  
A_2\vee L_{n-2}^{n-2}+(1,\n-,2,\dots,2)   %% E_7\vee L_n^n
\]
\subsubsection{Indecomposable curves of type $A$}

\begin{gather*}
A_1\colon (1,-)+(-,1)\\
A_{2k-1}\colon (1,-)+(1,k) \leftarrow A_{2k}\colon (2,2k+1)
\end{gather*}

\subsubsection{Deformations of $D_k\vee L_{n-2}^{n-2}$}\label{dvl}
\begin{gather*}
L_n^{n}+(1,\dots,1)\\
A_{2k}\vee L_{n-2}^{n-2}+ (\n-,1,1\dots,1)\leftarrow
A_{2k-1}\vee L_{n-2}^{n-2}+ (\n-,1,1\dots,1)
\end{gather*}
Here $n\geq 2$ is again the embedding dimension. For $n=2$
the curves are the plane curves $D_4$, $D_{2k+3}$ and $D_{2k+2}$.

\section{Classification}
The proof  of Theorems \ref{thmclass} and \ref{confine}
proceeds by classifying all parametrisations which do not
deform into  a parametrisation of a curve  $L_n^{n-2}$.
The result is that these do not have moduli. Furthermore
we show that all other parametrisations do deform into $L_n^{n-2}$.
Therefore singularities of the list can only deform into
other singularities of the list, implying simpleness.

We start by describing large classes of  parametrisations, which are not
simple. From them we derive restrictions on the multiplicities 
of the irreducible components of curves with simple parametrisation.

\subsection{Some adjacencies}
\subsubsection{Every parametrisation of a curve $C=C_1\cup C_2$
deforms into $C_1\vee C_2$} Parametrise  $C_1$ with $\vp^{(1)}
\colon \wl C_1 \to\C^n$ and  $C_2$ with $\vp^{(2)}
\colon \wl C_2 \to\C^n$, and consider the curve as lying in $\C^{2n}$.
The  parametrisation, given by $(\vp^{(1)}, 0)$ and 
$(\vp^{(2)},s\vp^{(2)})$ has for $s\neq 0 $ image $C_1\vee C_2$.

If  a curve $C$ with simple parametrisation is reducible, 
and can be written as union $C'\cup C''$,
then both $C'$ and $C''$ have a simple parametrisation.

\subsubsection{A parametrisation of an irreducible curve of multiplicity
$m$ deforms into the monomial curve $M_m$}
We may assume that we have a parametrisation $\vp \colon \wl C
\to \C^m$ with first component $z_1=\vp_1(t)=t^m$. Now deform
$z_1=t^m$, $z_i=\vp_i(t)+st^{m+i-1}$ for $i\geq 2$.
\subsubsection{$M_m$ deforms into $M_{m_1}\vee\dots\vee
M_{m_k}$ for any partition $(m_1,\dots,m_k)$ of $m$} 
A description in terms of equations is given in \cite[p.~199]{st-u}.
A simple argument in terms of the parametrisation is the following.
The curve $M_{m}$ is
a special hyperplane section of the cone over the rational
curve of multiplicity $m$ and is resolved by one blow-up.
Now deform the smooth strict transform such that it
intersects the exceptional divisor in $k$ points
with multiplicities given by the
partition $(m_1,\dots,m_k)$ and blow down again.
\subsubsection{$A_2\vee L \to A_3$}
This is a special case of the adjacency  $A_k\vee L \to D_{k+1}$
(in fact $D_3=A_3$), which
can be inferred from the formulas of \cite[p.~1040]{FK}, but is missing
in \cite[Diagram 4]{FN}.
Consider the deformation
\[
\Rk
\begin{pmatrix}
x^k & y & z\\
y & x & s
\end{pmatrix}
\leq 1\;.
\]
One branch is $(0,0,t_1)$ and for even $k$ the second
branch is $(t_2^2, t_2^{k+1},st_2^{k-1})$.

\subsection{First consequences}
The curve $A_3\vee A_3$ 
is not simple, as it deforms to $L_4^2$ (with constant 
$\delta=5$); just rotate some lines in the plane spanned by the
tangent lines of both $A_3$-singularities.  As $A_2\vee L \to A_3$, and 
$M_6 \to A_2\vee A_2 \vee L_2^2$, the last two curves are also 
not simple. Also $M_5\vee L$ and $M_4\vee L_2^2$ are not simple.

We conclude that the parametrisation of an irreducible curve
of multiplicity at least 6 is not simple. 
A simple parametrisation with at least four
branches has at most one singular component, of multiplicity
at most three.
A (sporadic) simple curve has at most two singular components
($A_2\vee A_2\vee A_2$ is not simple), and the multiplicity is at
most 5.

\subsection{Irreducible curves}
We may assume that the parametrisation has the form
$x_i=\vp_i(t)$, $i=1,\dots,k$, with $v(\vp_i)<v(\vp_j)$ for
$i<j$, $v(\vp_i)$ being the order in $t$ of $\vp_i$.
We also can achieve that $v(\vp_j)$ does not lie in the
semigroup generated by the $v(\vp_i)$ with $i<j$.

A parametrisation of a curve of multiplicity 5 (that is, 
$v(\vp_1)=5$) is not simple if $v(\vp_4)>10$: deform into
$L_5^3$ by perturbing $\vp_1$, $\vp_2$ and $\vp_3$ with
terms divisible by $t^5-s$ and $\vp_j$, $j\geq4$, with terms
divisible by $(t^5-s)^2$. Similarly, the curve deforms to
$L_4^2$ if $v(\vp_3)\geq8$.
A parametrisation of a curve of multiplicity 4  is not simple if  
$v(\vp_3)\>8$. A multiplicity 3 curve deforms into
$L_3^1$, a curve with planar 2-jet, if $v(\vp_3)>9$.
Irreducible double points are simple.

This leaves only a few possibilities for simple parametrisations.
Their normal forms can be computed with standard methods;
they can be found in the paper by Ebey \cite{eb}.

\begin{lemma}
The curve $(5,6,7,9)$ is not simple, as
$(5,6,7,9)\to L_3^1$.
\end{lemma}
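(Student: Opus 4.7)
The plan is to exhibit a one-parameter deformation of $\varphi(t)=(t^5,t^6,t^7,t^9)$ whose nearby fibre is a three-branch singularity equivalent to the unimodal plane-curve singularity $\tilde E_8$, which then deforms further to the confining singularity $L_3^1$ of Definition~\ref{lnn2}; non-simpleness will follow from Theorem~\ref{confine}.

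I would take the family
\[
\Phi_s(t)=\bigl(t^2(t^3-s),\ (t^3-s)^2,\ t(t^3-s)^2,\ t^3(t^3-s)^2\bigr),
\]
for which $\Phi_0=\varphi$ and the powers of the factor $(t^3-s)$ appearing in the four coordinates are $1,2,2,2$. For $s\neq 0$ the preimage $\Phi_s^{-1}(0)$ consists of the three cube roots $t_i=\omega^i s^{1/3}$ of $s$, with $\omega=e^{2\pi i/3}$; at each $t_i$ the coordinate $\psi_1$ has a simple zero while $\psi_2,\psi_3,\psi_4$ all have double zeros. The nearby fibre therefore has three smooth branches meeting at the origin with the common tangent direction $e_1$.

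The key step is the pair of global polynomial identities
\[
\psi_4-s\,\psi_2=(t^3-s)^3,\qquad \psi_1^2-s\,\psi_3=t(t^3-s)^3,
\]
both divisible by $(t^3-s)^3$ and so vanishing to order three on each branch. Consequently in the coordinates $z'=z-x^2/s$, $w'=w-sy$ the 2-jets of all three branches lie on the common smooth $2$-dimensional surface $\{z'=0,w'=0\}\subset\C^4$, exhibiting a planar 2-jet. Inside this surface the three branches trace the tangent parabolas $y=x^2/(st_i)$ with distinct coefficients, a configuration isomorphic to $\tilde E_8$. A further perturbation lifting one of the three branches out of the 2-jet surface then realises $L_3^1$ in the versal deformation of~$\varphi$.

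The main obstacle is the algebraic derivation of the two polynomial identities above; both follow in a few lines from $\psi_2=(t^3-s)^2$ together with $\psi_3=t\psi_2$, $\psi_4=t^3\psi_2$ and $\psi_1=t^2(t^3-s)$. Once they are in hand, the recognition of the planar 2-jet and the identification of the three-parabolas configuration as $\tilde E_8$ (whence further as $L_3^1$) is immediate, and the non-simpleness of $\varphi$ then follows from Theorem~\ref{confine}.
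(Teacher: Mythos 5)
Your deformation is the paper's own deformation in disguise: since $t^3\psi_2=(t^3-s)^3+s\,\psi_2$, the linear coordinate change $w\mapsto w-sy$ turns your family into $\vp_s(t)=((t^3-s)t^2,(t^3-s)^2,(t^3-s)^2t,(t^3-s)^3)$, which is exactly what the paper writes down. Your two identities $\psi_4-s\psi_2=(t^3-s)^3$ and $\psi_1^2-s\psi_3=t(t^3-s)^3$ are correct and are precisely the verification that the paper compresses into the (rather garbled) line ``$w=sz-x\equiv0\bmod (t^3-s)^3$''; they show that $w-sy$ and $z-x^2/s$ vanish to order three on each of the three smooth branches, so the $2$-jet of $\vp_s$ is planar for $s\neq0$. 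Up to that point your argument is complete and matches the paper.

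The last step, however, is confused. The nearby fibre is \emph{not} $\wt E_8$: one has $\delta(\wt E_8)=6$ while $\delta(5,6,7,9)=5$, so a deformation of the parametrisation to $\wt E_8$ is impossible by semicontinuity of $\delta$. Only the $2$-jets of the three branches lie in the surface $\{z'=w'=0\}$ and form the three tangent parabolas; the branches themselves leave that surface at order exactly three (your identities show $w'=(t^3-s)^3$ and $z'=-t(t^3-s)^3/s$ are nonzero of order $3$ on each branch), and a direct computation gives $\delta=5$ for the nearby fibre. In other words the fibre is already a curve of type $L_3^1$ --- three pairwise tangent smooth branches with distinct, planar $2$-jets --- and no further perturbation is needed. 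Worse, the ``further perturbation lifting one of the three branches out of the $2$-jet surface'', taken literally, would destroy the planarity of the $2$-jet and produce the \emph{simple} singularity $J_{2,0}(2)=S_3^t$ rather than the confining one; the whole point of Definition~\ref{lnn2} is that $L_3^1$ is the curve whose branches stay on a smooth surface to second order. So you should delete the $\wt E_8$ detour and the extra perturbation, and instead conclude directly: the planarity of the $2$-jet established by your two identities identifies the nearby fibre as the confining singularity $L_3^1$, whence non-simpleness by Theorem~\ref{confine}.
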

\begin{proof}
Consider the deformation
\[
\vp_s(t)=((t^3-s)t^2,(t^3-s)^2,(t^3-s)^2t,(t^3-s)^3)\;.
\]
The parametrisation satisfies the equations
$w=sz-x\equiv 0 \mod (t^3-s)^3$, so
for $s\neq 0$ the 2-jet of $\vp_s(t)$ is planar.
\end{proof}

\begin{prop}\label{prop5678}
The parametrisations of the curves $(5,6,7,8)$ and $(4,6,7)$ are simple. 
They deforms into the other unibranch sporadic curves of\/
{\rm\ref{unibranch}}  and the irreducible triple points
of\/ {\rm\ref{typeE}}.
\end{prop}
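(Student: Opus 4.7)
I organise the proof around two steps: exhibiting the listed adjacencies as explicit one-parameter deformations, and verifying that no further iso-class arises in the $\cA$-versal base of $(5,6,7,8)$ or $(4,6,7)$.

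For the adjacencies I write down explicit families. The stabilisations to higher embedding dimension are realised, after enlarging the target to $\C^5$, by
\[
\vp_s(t)=(t^5,t^6,t^7,t^8,s t^9), \qquad \vp_s(t)=(t^4,t^6,t^7,s t^9),
\]
giving $(5,6,7,8)\rightharpoondown(5,6,7,8,9)$ and $(4,6,7)\rightharpoondown(4,6,7,9)$. The multiplicity-dropping and $v$-lowering arrows down to $(4,5,6,7)$, $(4,5,6)$, $(4,5,7)$, $(3,4,5)$ and $(3,7,8)$ are each produced by perturbing one coordinate by a monomial of strictly lower order and absorbing the resulting cross-terms via a linear reparametrisation of source and target. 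The two diagrams of Section 4.1.1 are thus assembled one arrow at a time, starting from the two rightmost curves.

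For exhaustion I combine the multiplicity restrictions of Section 5.3 with the semi-continuity of $\delta$. Every iso-class appearing in the versal family has $\delta\le 5$ and, if irreducible, multiplicity at most $5$ with first exponents constrained ($v(\vp_3)\le 7$ at multiplicity $5$, $v(\vp_3)\le 8$ at multiplicity $4$, $v(\vp_3)\le 9$ at multiplicity $3$). These conditions cut the admissible semigroups down to a finite list, each supporting a quasi-homogeneous parametrisation that is $\cA$-rigid in its iso-class (by the normal-form data of Ebey \cite{eb}). Together with the explicit adjacencies constructed above, this pins down the entire versal stratification to the enumerated curves.

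The principal obstacle is precisely this exhaustion step: a priori, perturbations of $(5,6,7,8)$ or $(4,6,7)$ could produce strata with semigroups outside the enumerated list, or reducible limits not among the $E$-type triple points of Section 4.2.1. Ruling this out requires a direct tangent-space calculation at each listed normal form, showing that every generator of the $\cA$-normal module is either trivial (reparametrisation of the same curve) or realises one of the adjacencies already exhibited. The calculation is algorithmic but delicate; it is the technical core of the proof.
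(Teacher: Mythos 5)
Your adjacency constructions are fine (the paper itself dismisses them with ``easily established''), but the proof has a genuine gap exactly where you locate ``the technical core'': the exhaustion step is announced, not performed. Moreover the method you propose for it would not suffice even if carried out. Checking that every generator of the $\cA$-normal module of each listed normal form is either trivial or realises a listed adjacency only controls first-order deformations; the isomorphism classes occurring in the versal family are governed by the whole base space, and adjacencies --- in particular to strata with moduli --- can occur at arbitrary order and along arbitrary arcs in the base, not just along coordinate directions of a miniversal unfolding. Also, the multiplicity and $v(\vp_i)$ bounds you quote from Section 5.3 are used in the paper in the opposite direction: they show that curves violating them deform to a confining singularity and hence are not simple; they do not by themselves show that the curves satisfying them exhaust the versal deformation of $(5,6,7,8)$ or $(4,6,7)$, nor that those curves are free of moduli.

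The paper's proof avoids all of this. By the global structure of the classification (Theorem \ref{confine}), a parametrisation is simple if and only if it does not deform into one of the confining curves $L_n^{n-2}$, so the only thing to verify is the non-existence of such a deformation. Since $\delta$ is semicontinuous and $\delta(5,6,7,8)=\delta(4,6,7)=5$, only the confining curves with $\delta=5$, namely $L_4^2$ and $L_3^1$, are relevant; a deformation of the parametrisation onto one of these would be $\delta$-constant, hence (by Teissier's criterion, \cite[II.2.6]{GLS}) would induce a flat deformation of the image curve. This is impossible because $(5,6,7,8)$ and $(4,6,7)$ are Gorenstein while $L_4^2$ and $L_3^1$ are not, and the Gorenstein property passes to nearby fibres in a flat family of curves. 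You should either reproduce this obstruction argument or supply a genuine substitute; without it (or a completed and corrected exhaustion) simpleness is not established.
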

\begin{proof}
As explained above, we now only show that there is no deformation
to $L_{n}^{n-2}$. It suffices to consider the ones with $\delta=5$.
A deformation of the parametrisation of $(5,6,7,8)$ or $(4,6,7)$ 
to $L_4^2$ or $L_3^1$ is $\delta$-constant, so also a deformation of
the curve. The curves $(5,6,7,8)$ and $(4,6,7)$ are Gorenstein, but
$L_4^2$ and $L_3^1$ not. Therefore such a deformation does not
exist.

The adjacencies of \ref{unibranch} and \ref{typeE} 
are easily established.	
\end{proof}

\subsection{Curves with one singular component of
multiplicity three or four}

\subsubsection{Multiplicity four}
The curve $(4,6,7,9)$ deforms into the (simple) curve
$J_{2,0}(2)=S_3^t$ consisting of three tangent lines with non-planar
2-jet and therefore $(4,6,7,9)\vee L$ deforms into $L_4^2$ and
is not simple. If the line in the curve $(4,5,6)\cup L$ is not 
transverse to the Zariski tangent space of $(4,5,6)$, then the
curve deforms into $L_5^3$.
This leaves  $(4,5,7)\vee L$, $(4,5,6)\vee L$ and curves of
the type $(4,5,6,7)\cup L$. The classification of the latter curves
follows from the general results of \cite[2.2]{st-c}: the
isomorphism type depends on the osculating space of $M_4$,
to which the line is tangent, and the line can be taken to be 
a coordinate axis, except in the most degenerate case, that the
line is tangent to the tangent line of the curve.  The curve
$M_4$ deforms into $D_4$, with tangent plane the 
$(x_1,x_2)$-plane, so if the line is tangent to this plane, there
is a deformation to $L_4^2$. 

\begin{prop}\label{prop457}
The parametrisation of the curve $(4,5,7)\vee L$ is simple. 
It deforms into the other sporadic curves of\/
{\rm\ref{multfour}}. 
\end{prop}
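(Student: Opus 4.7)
The plan is to follow the template of Proposition~\ref{prop5678}: first establish that $(4,5,7)\vee L$ does not deform to any confining singularity $L_n^{n-2}$ (giving simpleness), and then realise the adjacencies listed in~\ref{multfour}.

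For simpleness, I compute $\delta((4,5,7)\vee L) = \delta((4,5,7)) + 1 = 5$, using that the semigroup $\langle 4,5,7\rangle$ has gaps $\{1,2,3,6\}$ and so $\delta = 4$, together with the wedge formula $\delta(C\vee L) = \delta(C) + 1$. Since $\delta$ is upper semi-continuous under parametrisation deformations and (by the proof of Theorem~\ref{confine} in Section~5) the only confining singularities with $\delta \le 5$ are $L_3^1$ and $L_4^2$, both with $\delta = 5$, any hypothetical deformation to a confining singularity is $\delta$-constant and targets one of these two. By Teissier's theorem such a deformation of the parametrisation is simultaneously a flat deformation of the image curve.

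The main obstacle is to rule out a flat deformation of $(4,5,7)\vee L$ to $L_3^1$ or $L_4^2$. Unlike in Proposition~\ref{prop5678}, the Gorenstein argument is unavailable: $\langle 4,5,7\rangle$ is not symmetric and neither $(4,5,7)$ nor $(4,5,7)\vee L$ is Gorenstein. In its place I would use an invariant sensitive to the multiplicity-four branch of $(4,5,7)\vee L$, such as the Hilbert function of the tangent cone combined with its non-reduced structure on the tangent line of $(4,5,7)$, or the Cohen--Macaulay type of the local ring; by contrast, the tangent cones of $L_3^1$ and $L_4^2$ are reduced unions of distinct lines, which should obstruct the required flat specialisation.

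For the adjacencies in~\ref{multfour}, each is realised by an explicit one-parameter family of parametrisations. For the horizontal adjacency $(4,5,7)\vee L \to (4,5,6,7)+(\n-,\n-,1,\n-)$, take $\vp_{1,s}(t_1) = (t_1^4, t_1^5, st_1^6 + t_1^7, 0)$ on the first branch and interpolate the second branch from $(0,0,0,t_2)$ towards $(0,0,t_2,0)$; for $s \ne 0$, a rescaling of $t_1$ turns the first branch into $(4,5,6,7)$ and a rotation in the $(x_3,x_4)$-plane aligns the second branch along $x_3$. The downward adjacency $(4,5,7)\vee L \to (4,5,6)\vee L$ is obtained analogously (introducing a parameter that kills the $t_1^7$ term while leaving the wedge structure intact), and the remaining adjacencies of~\ref{multfour} follow by similar explicit families.
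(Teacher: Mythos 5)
Your reduction is sound and matches the paper's: since $\delta((4,5,7)\vee L)=5$ and the confining singularities with $\delta\le 5$ are exactly $L_3^1$ and $L_4^2$ (both with $\delta=5$), any hypothetical deformation to a confining singularity would be $\delta$-constant, hence by Teissier also a flat deformation of the image. You also correctly observe that the Gorenstein argument of Proposition \ref{prop5678} is unavailable here. But the step that actually carries the proof --- excluding the deformation to $L_3^1$ and $L_4^2$ --- is not done: you only say that some invariant ``should obstruct'' it. The two invariants you name do not work. The Cohen--Macaulay type fails concretely: $L_3^1$ and $L_4^2$ are codimension two determinantal with three ideal generators, hence of type $2$, and $(4,5,7)$ (semigroup with pseudo-Frobenius elements $3$ and $6$) likewise has type $2$, as does $(4,5,7)\vee L$; so upper semicontinuity of the type gives $2\ge 2$ and no contradiction. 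The Hilbert function of the tangent cone is not semicontinuous in flat families in any form that would apply here (only the multiplicity is, and $5\ge 4$, $5\ge 3$ give nothing). The paper's argument is genuinely different and geometric: for $L_4^2$ it notes that $(4,5,7)$ would have to degenerate into three smooth branches tangent to the plane of $L$, projects along $L$ to get a $\delta$-constant deformation $(4,5,7)\to J_{2,0}(2)$ in $\C^3$, and kills this by tracking the limiting common tangent line $L_s$: its two linear equations $Az_1+Bz_2+Cz_3=0$ must satisfy $A\vp_1+B\vp_2+C\vp_3\equiv 0$ modulo $(t-a_s)^2(t-b_s)^2(t-c_s)^2$, and at $s=0$ this forces $A=B=0$, leaving only one equation. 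For $L_3^1$ it projects to get two cusps with a common tangent, which has $\delta\ge 5>4=\delta(4,5,7)$. Some argument of this kind is needed; your proposal does not supply one.

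The adjacency family you give is also incorrect as written. For $s\ne 0$ the branch $(t^4,t^5,st^6+t^7,0)$ lies in the hyperplane $x_4=0$ and is $\cA$-equivalent to $(4,5,6)$ (its local ring contains $st^6+t^7$ but neither $t^6$ nor $t^7$ separately, so $\delta=4$), not to $(4,5,6,7)$, which has embedding dimension four and $\delta=3$. Your family therefore realises at best $(4,5,7)\vee L\rightharpoondown (4,5,6)\vee L$, not the claimed arrow to $(4,5,6,7)+(\n-,\n-,1,\n-)$. The correct family places the perturbation in a new coordinate: take the first branch $(t^4,t^5,st^6,t^7)$ and the second branch the $x_3$-axis; at $s=0$ this is $(4,5,7)\vee L$ and for $s\ne 0$ it is $(4,5,6,7)+(\n-,\n-,1,\n-)$.
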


\begin{proof}
If $(4,5,7)\vee L\to L_4^2$, then $(4,5,7)$ deforms into three
smooth branches, tangent to plane containing the line $L$. Projection
along $L$
onto $\C^3$ gives a $\delta$-constant
deformation to the space curve $J_{2,0}(2)$
consisting of three smooth branches with common tangent.
According to the tables in \cite{FK} such a deformation does
not exist. To be self-contained we give a proof along the lines of
the proofs in \cite{zh}.

So suppose $(4,5,7)\to J_{2,0}(2)$.
The parametrisation has the form
\[
\vp_i(t,s)=(t-a_s)(t-b_s)(t-c_s)\psi_i(t,s)
\]
for $i=1,2,3$.
The images of the germs $(t,a_s)$, $(t,b_s)$ and $(t,c_s)$ are 
tangent to a line
$L_s$, which has a limiting position for $s\to 0$. By a coordinate
transformation we may suppose that the line $L_s$ is constant. 
It is given by two linearly independent equations of the form
$Az_1+Bz_2+Cz_3=0$. This implies that 
\[
A\vp_1(t,s)+B\vp_2(t,s) + C\vp_3(t,s) \equiv 0
 \mod
  (t-a_s)^2(t-b_s)^2(t-c_s)^2
\]
for all $s$. Specialising to $s=0$ leads to the 
equation 
$
At^4+Bt^5+Ct^7\equiv 0 \mod t^6
$,
from which we conclude that $A=B=0$. But then there is only one
linear equation.

If $(4,5,7)\vee L\to L_3^1$, then $(4,5,7)$ deforms into two
smooth branches with as common tangent the line $L$. Projection
onto $\C^3$ gives a deformation to the space curve consisting
of two cusps with common tangent, as $L_3^1$ has planar 2-jet.
But this curve is at least $Z_9$ with $\delta=5>4=\delta(4,5,7)$.
\end{proof}

\subsubsection{Multiplicity three}
Let $C_3$ be an irreducible curve of multiplicity 3. A 
parametrisation-simple
union of $C_3$ and $n$ lines has embedding dimension 
at least $n+2$, for otherwise it deforms into $L_{n+3}^{n+1}$.
The $n$ lines form an $L_n^n$: 
for $n=1$ this is trivial; if $n=2$  and the lines are $A_{2k-1}$
with $k>1$, then the parametrisation deforms into the non-simple
$A_{2k-1}\vee A_4$, as $M_3$ deforms into $A_4$; finally, if
$n>2$ and the lines deform into $L_n^{n-1}$, then the parametrisation
deforms into $L_{n+1}^{n-1}$, as the parametrisation of
$C_3$ deforms into a
smooth branch with arbitrary tangent.

The curve $E_{12}(2)=(3,7,8)$ deforms into 
$J_{2,0}(2)=S_3^t$, so $(3,7,8)\vee L$ is not simple.

\begin{prop}\label{prope8}
The curves $E_6\vee L_n^n$ and $E_8\vee L_n^n$
have simple parametrisations.
\end{prop}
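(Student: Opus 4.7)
The plan is to apply the strategy of Proposition~\ref{prop457}: show that neither $E_6 \vee L_n^n$ nor $E_8 \vee L_n^n$ admits a parametric deformation to a confining singularity, so that by Theorem~\ref{confine} both parametrisations are simple.

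First I would reduce to a single candidate target. The normalisation of $E_k \vee L_n^n$ has exactly $n+1$ components; since the tangent directions of the $n$ line branches are close to the independent vectors $e_3,\dots,e_{n+2}$ throughout a neighborhood of $s=0$, no two line branches can collide in the image, and the image of any nearby parametric fibre also has $n+1$ branches. Combined with the requirement $m \geq 3$ in Definition~\ref{lnn2}, the only possible confining target is $L_{n+1}^{n-1}$; the cases $n \leq 1$ have no candidate target and are trivially simple. Moreover every branch of $L_{n+1}^{n-1}$ is smooth, so any such adjacency must smoothen the $E_k$ cusp.

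The main step is a comparison of tangent-direction spans. By the openness of linear independence, the tangent directions $v_j(s)$ of the $n$ line branches span an $n$-dimensional subspace for all $s$ sufficiently close to $0$; hence the span of the tangent directions of the smooth branches of the deformed source has dimension at least $n$. Reading off the corresponding span for $L_{n+1}^{n-1}$ from Definition~\ref{lnn2}: it equals $n-1$ for $n \geq 4$, equals $2$ for $n=3$ (all four tangent vectors of $L_4^2$ lie in the plane $\{z=0\}$), and equals $1$ for $n=2$ (the three branches of $L_3^1$ share a common tangent). In each case this is strictly less than $n$, and since the dimension of the linear span of the smooth part of the reduced tangent cone is an invariant of the isomorphism class of the curve germ, the adjacency is impossible. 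The most delicate cases are $n=2,3$, where the intrinsic embedding dimensions of source and target nearly coincide, so that the argument depends essentially on the non-generic tangent-cone geometry of $L_3^1$ and $L_4^2$ built into Definition~\ref{lnn2}, rather than on embedding dimension alone.
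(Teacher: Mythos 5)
Your reduction to a single candidate target is based on a false premise, and this is where the proof breaks. In a deformation of the parametrisation the number of branches of the multi-germ at a point of the image is \emph{not} bounded by the number of components of the source: a singular branch of multiplicity $m$ can split into up to $m$ smooth branches all passing through one point. This is used constantly in the paper --- e.g.\ $E_8=(3,5)\rightharpoondown M_3\rightharpoondown L_3^3$, so the cusp alone already produces three concurrent smooth branches. Consequently the multi-germs near $E_8\vee L_n^n$ can have up to $n+3$ branches, and the confining targets that actually have to be excluded are $L_{n+2}^{n}$ (cusp splitting into two branches through the singular point) and $L_{n+3}^{n+1}$ (cusp splitting into three), not $L_{n+1}^{n-1}$. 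Your tangent-span comparison is correct for the target you chose, but that target is the wrong one: $L_{n+2}^n$ spans $n$ dimensions and $L_{n+3}^{n+1}$ spans $n+1$, so the span of the $n$ undeformed lines gives no contradiction there. For the same reason the claim that $n\leq 1$ is ``trivially simple'' fails: $E_8\vee L$ must a priori be tested against $L_3^1$ and $L_4^2$.

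The paper's proof handles exactly the two dangerous targets. It first reduces to $E_8\vee L_n^n$ (since $E_8\to E_6+A_1$), then reduces to the case where all $n$ lines pass through the singular point of the target (otherwise one gets the same adjacency from $E_8\vee L_m^m$ with $m<n$), which forces $k=n$ or $k=n+1$. The contradiction then comes from projecting onto the plane of the $E_8$: for $L_{n+2}^n$ the two branches coming from the cusp are tangent to the span of $L_n^n$, so they project to two plane branches each of multiplicity at least two, contradicting semicontinuity of multiplicity ($4>3$); for $L_{n+3}^{n+1}$ the three branches project to three mutually tangent smooth plane branches, contradicting semicontinuity of $\delta$ ($\geq 6>4=\delta(E_8)$). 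If you want to salvage your argument, you would need to replace the branch-count step by this case analysis (or an equivalent one), since the tangent-cone invariant you use cannot distinguish the deformed curve from $L_{n+2}^n$ or $L_{n+3}^{n+1}$.
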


\begin{proof}
As $E_8\to E_6+A_1$ it suffices to show simpleness for $E_8\vee L_n^n$.
We have to exclude deformations
of the parametrisation into an  $L_{k+2}^k$. If $n-m$ of the $n$ 
deformed lines do not pass through the singular point of $L_{k+2}^k$,
then there is also a deformation $E_8\vee L_n^n\rightharpoondown
L_{k+2}^k$.  So we may assume that $m=n$, and that the $n$ lines
are not deformed at all. The only possibilities for $k$ are therefore
$k=n$ and $k=n+1$.

If  $E_8\vee L_n^n\rightharpoondown
L_{n+2}^n$, then $E_8$ is deformed into two smooth branches
tangent to the space spanned by  $L^n_n$. Projection onto the plane 
of the $E_8$ gives a deformation of the parametrisation into 
the union of two plane curves of multiplicity two, which is impossible.

If $E_8\vee L_n^n\rightharpoondown
L_{n+3}^{n+1}$, then projection onto the plane of the  $E_8$ 
gives a deformation of $E_8$ into three tangent smooth branches,
which is also impossible, as this would increase $\delta$.
\end{proof}

For the curves $C_3$ of type 
$E_8(1)=(3,5,7)$ and $E_6(1)=M_3=(3,4,5)$
with $(C_3\cdot L_n^n)>1$
we look at the 1-dimensional intersection $T$ of the 
Zariski tangent spaces of the singular curve and $L_n^n$.
If for $(3,5,7)$ the line $T$ lies in the $(x_1,x_2)$-plane,
then the curve deforms into $L_{n+3}^{n+1}$, as $(3,5,7)
\to D_4$. Otherwise there is a transformation bringing $T$
to the $x_3$-axis. 
In $L_n^n$ the line $T$ is in the direction $(1,\dots,1,0,\dots,0)$.
The curve is indecomposable if and only if there are no zeroes.
We transform to a different normal form, where
the line $T$ is a coordinate axis.
The resulting curve is a deformation
of $E_8\vee L_n^n$.

The curve $(3,4,5)$ deforms into $A_3$ with the $x_1$-axis
as tangent line, so if $T$ is this axis, then the curve is not simple
if $n>1$: it deforms to $L_{n+2}^n$. For $n=1$ the curve
is simple: if the 2-jet of the parametrisation of $L$
has image $T$, then it is the curve $W_9$, which is a 
$\delta$-constant deformation of $Z_{10}=(4,6,7)$. There is
also a curve with $\delta=4$, with $L=(t,0,0,t^2)$.
For the other cases the intersection multiplicity 
$(M_3\cdot L_n^n)$ is equal to 2, and \cite[2.3]{st-c}
applies. If $T$ does not lie in the $(x_1,x_2)$-plane, then
the curve is  a deformation of $E_6\vee L$, otherwise of
$E_8\vee L$, under the deformation of the parametrisation
$(t^3,st^4,t^5,0,\dots,0)$.

\subsection{Two singular components}
As every parametrisation of an irreducible curve other than
$A_2$ deforms into $A_3$, one component has to be $A_2$.
The curve $A_2\vee A_5$ is not simple, as it deforms into
$L_3^1$. This implies that the other singular component
is $M_3$, $A_4$ or $A_2$.

\subsubsection{$A_2\,\cup\, M_3$}
The embedding dimension is at least
4. Unless the curve is  $A_2\vee M_3$ we let
$T$ be the intersection line of the Zariski tangent spaces
of the components. As $A_2$ deforms into $A_1=L_2^2$ the
curve is not simple, if $T$ is the tangent line of $M_3$, by what
was said above for $M_3\cup L_2^2$.
Otherwise $(A_2\cdot M_3)=2$ and $T$ may be
taken as coordinate axis.  There are four curves to consider.

\begin{lemma}
The curve $(2,3,\n-,\n-)+(4,\n-,5,3)$ and
$(2,3,\n-,\n-)+(5,\n-,4,3)$ are not simple, as they deform to
$L_3^1$.
\end{lemma}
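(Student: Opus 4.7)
The plan is to imitate the explicit construction used in the earlier Lemma on $(5,6,7,9)$: write down a one-parameter family $\vp_s$ of parametrisations with $\vp_0$ the given curve and, for $s\ne 0$, $\vp_s$ a parametrisation of $L_3^1$. Both curves have $\delta=5=\delta(L_3^1)$, so any such deformation is automatically $\delta$-constant and, by Teissier's theorem, equivalently a deformation of the image curve; the real task is to write $\vp_s$ down.

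For the curve $(2,3,\n-,\n-)+(4,\n-,5,3)$ I would look for $\vp_s$ whose $M_3$-branch $(u^4,0,u^5,u^3)$ is modified by multiplication with a splitting factor such as $(u^3-s)$, together with corrections in the $x_2$-slot, while the $A_2$-branch $(t^2,t^3,0,0)$ is simultaneously perturbed so that for $s\ne 0$ the preimages of the origin comprise exactly three smooth points; the combinatorics should arrange that these three branches share a common tangent line along the intersection $T$ of the Zariski tangent spaces of the two components, namely the $x_1$-axis. For the second curve, one obtains the analogous deformation by interchanging the roles of $x_1$ and $x_3$ in the $M_3$-branch.

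The verification reduces to three points: (i) at $s=0$ one recovers the given parametrisation (automatic); (ii) for $s\ne 0$ the three branches have parallel tangent vectors, which follows from comparing leading terms of $\vp_s$ at each preimage of the origin; and (iii) the 2-jet is planar, i.e.\ the three branches lie modulo $\tau^3$ on a common smooth surface $F_1=F_2=0$ in $\C^4$ with independent linear parts. Point (iii) is what distinguishes $L_3^1$ from the non-planar $S_3^t=J_{2,0}(2)$, and it is the main obstacle: as in the earlier Lemma, it reduces to a Vandermonde-type linear system in the roots of the splitting factor, and the correct choice of correction terms in $\vp_s$ is precisely what makes this system solvable. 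Generic solvability (for $\lambda\ne 0,1$) yields a valid pair $(F_1,F_2)$ and completes the proof.
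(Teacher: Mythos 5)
Your overall strategy --- an explicit one-parameter family, followed by a check that the nearby multi-germ consists of three mutually tangent smooth branches with planar $2$-jet and $\delta=5$ --- is the same as the paper's, but the concrete construction you sketch cannot work, and the points where it fails are exactly where the content of the lemma lies. First, the branch count: the source of the deformation is the fixed disjoint union of two discs (one for the cusp, one for $M_3$), and $L_3^1$ has three branches. If you split the $M_3$-branch with a factor $(u^3-s)$ so that it meets the origin in three smooth points, the cusp-disc must contribute no branch at the origin; but then the germ at the origin is a deformation of the parametrisation of $M_3$ alone, and by semicontinuity of $\delta$ under deformations of the parametrisation its $\delta$ is at most $\delta(M_3)=2$, so it cannot be $L_3^1$, which has $\delta=5$. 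The only admissible distribution is two branches from the $M_3$-disc and one from the cusp-disc: $M_3$ must be split $\delta$-constantly into an $A_3$ (splitting factor $(t^2-s^2)$ in suitable powers), the cusp must be smoothed, and the unit of $\delta$ released by smoothing the cusp must reappear as contact of order $3$ between the smoothed branch and the $A_3$. This is what the paper does, after first reducing the first curve to the second.

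Second, the tangent direction: the two branches into which $M_3$ splits have tangent lines converging to the tangent line of $M_3$, i.e.\ the $x_4$-axis, not to the intersection line $T$ of the Zariski tangent spaces (the $x_1$-axis). So the cusp must be smoothed in the $x_4$-direction, as in the paper's $(t^2,t^3,0,2st)$; arranging common tangency along $T$ as you propose is impossible for the branches coming from the $M_3$-disc. Finally, the verification that the $2$-jet is planar and that the contact of the smoothed cusp with the $A_3$ is exactly $3$ (forcing $\delta=5$) is the actual substance of the proof; the paper exhibits an explicit smooth surface containing the $A_3$ and checks that the smooth branch lies on it modulo terms of degree $3$. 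Deferring this to ``generic solvability of a Vandermonde-type system'' leaves the crux unproved --- note that for the two simple curves of Proposition \ref{propA2M3}, which differ from the present ones only in the position of $T$, the analogous deformation does \emph{not} exist, so a genuine computation distinguishing the cases cannot be avoided.
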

\begin{proof} 
The first curve deforms into the second. For that case we
deform  the cusp 
into a smooth branch by $(t^2,t^3,0,2st)$
and  $M_3$ into $A_3$
by 
\[
((t^2-s^2)^2t, 0, (t^2-s^2)^2, (t^2-s^2)(t+2s))\;.
\]
The $A_3$ lies on the smooth surface
\[
12xs^6-3w^2s^4-xw+z^2+2zws^2+12zs^8=0\;.
\]
The parametrisation of the smooth branch satisfies this equation
modulo terms of degree 3. The intersection number of the branch
with  $A_3$  is 3, so we have three smooth
tangent branches with $\delta=5$, which is $L_3^1$. 
\end{proof}

\begin{prop}\label{propA2M3}
The curves $(2,3,\n-,\n-)+(\n-,4,5,3)$
and $(2,3,\n-,\n-)+(\n-,5,4,3)$ are simple.
\end{prop}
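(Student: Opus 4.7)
The plan is to adapt the strategy of the proof of Proposition~\ref{prop457}, arguing by contradiction. Both curves have $\delta=5$ and total source multiplicity $2+3=5$ at the origin. Since the first source branch can produce at most $2$ image branches at the origin and the second at most $3$, the only confining singularities $L_n^{n-2}$ that can arise are $L_3^1$, $L_4^2$ and $L_5^3$; the remaining ones require more than $5$ branches, and adding trivial coordinates to pass to higher embedding dimension does not affect the count. The aim is to rule out a parametrisation deformation to each of these three candidates.

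The key tool will be a Zhitomirskii-style linear form argument. For each confining singularity there is a distinguished linear subspace of the ambient $\C^4$ — the common tangent line $L_s$ (and the 2-jet plane $Q_s$) of $L_3^1$, the common tangent plane $Q_s$ of $L_4^2$, and the $3$-dimensional ambient subspace $V_s$ of $L_5^3$ — containing each image branch to a prescribed order. Any linear form $\ell^{(s)}$ annihilating the chosen subspace vanishes on each image branch to the corresponding order, hence $\ell^{(s)}\circ\varphi_s^{(i)}(t)$ is divisible by the product of $(t-t_k(s))^{o_k}$ over the coalescing roots $t_k(s)\to 0$. Writing $\ell^{(0)}=Ax_1+Bx_2+Cx_3+Dx_4$ for the limit as $s\to 0$, the divisibility at $t=0$ turns into a vanishing condition on the polynomials
\[
\ell^{(0)}\circ\varphi_0^{(1)}(t)=At^2+Bt^3,\qquad
\ell^{(0)}\circ\varphi_0^{(2)}(t)=Dt^3+Ct^4+Bt^5
\]
(the second polynomial becomes $Dt^3+Bt^4+Ct^5$ for the curve $(2,3,\n-,\n-)+(\n-,4,5,3)$, which does not affect the conclusions). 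In each case the goal is to show that the space of $\ell^{(0)}$ satisfying the combined conditions is strictly smaller than the dimension of the annihilator.

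The verification then proceeds by the distribution $(r_1,r_2)$ of image branches between the two source branches. For $L_5^3$ only $(2,3)$ is possible, and $\ell^{(0)}$ must vanish identically on both source branches, forcing $A=B=C=D=0$ against $\dim V_0^\perp=1$. For $L_4^2$ the splits $(2,2)$ and $(1,3)$ both leave a one-dimensional solution space against $\dim Q_0^\perp=2$. For $L_3^1$ with distribution $(2,1)$, the common tangent line yields $A=B=0$ with $C,D$ free, a two-dimensional space against $\dim L_0^\perp=3$; for distribution $(1,2)$ the tangent line alone matches the annihilator dimension, so one has to invoke the stronger $2$-jet plane to force $A=0$ and $B=C=D=0$, i.e. $\ell^{(0)}=0$ against $\dim Q_0^\perp=2$.

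The main obstacle will be the observation that for $L_3^1$ the common tangent line alone does not suffice to exclude the split $(1,2)$: one has to use the additional structure of the 2-jet plane. Once this matching of splits to invariants is in hand — the split $(2,1)$ uses $L_s$, the split $(1,2)$ uses $Q_s$ — the Grassmannian compactness ensuring that $\ell^{(s)}$ and the annihilator subspaces converge to nonzero limits of the expected dimension is standard (identical to the one implicit in Proposition~\ref{prop457}), and the polynomial divisibility verifications are immediate.
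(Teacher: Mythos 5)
Your strategy is sound and the dimension counts I checked are all correct, but it is a genuinely different route from the paper's, so let me compare. The paper never considers $L_5^3$: since $\delta$ is semicontinuous under deformation of the parametrisation and $\delta(L_5^3)=6>5=\delta(A_2\cup M_3)$, only $L_4^2$ and $L_3^1$ (both with $\delta=5$) need excluding; your branch count reaches the same reduction at the price of one extra case, which your argument does dispose of. For $L_4^2$ the paper's proof is essentially dual to yours: it identifies the limiting tangent-cone plane with the plane of the cusp (your $A=B=0$) and derives the contradiction from the order of $\vp_4$ on the $M_3$-branch (your $D=0$). For $L_3^1$ the paper argues quite differently: the distribution must be $(1,2)$ since $A_2$ cannot deform to $A_3$ ($\delta$ would increase), the $\delta$-constant deformation $M_3\rightharpoondown A_3$ induces a deformation $z_1+sf=0$ of the hyperplane section containing $M_3$, and the intersection multiplicity of the remaining smooth branch with this $A_3$ is then at most $2$, so the three branches form $J_{2,0}(2)$ with $\delta=4$ rather than $L_3^1$. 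Your arguments for $L_5^3$, $L_4^2$ and the split $(2,1)$ use only the Zariski tangent space, the span of the tangent cone and the common tangent line, all of which are honestly invariant under the ambient isomorphism identifying the nearby germ with the model, so those cases are fine.

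Two points need repair. First, the distribution $(0,3)$ for $L_3^1$ is missing; it is excluded at once ($\delta(M_3)=2<5$, or by your own method: three order-$2$ conditions on the second branch leave a one-dimensional space against the three-dimensional annihilator of the tangent line). Second, your claim $\dim Q_0^\perp=2$ for the $2$-jet plane of $L_3^1$ is not invariant: planarity of the $2$-jet means the $2$-jets lie on a smooth \emph{surface}, and after a quadratic coordinate change such as $z\mapsto z+y^2$ the space of linear forms vanishing to third order on all three branches of the normal form of $L_3^1\subset\C^4$ drops from dimension $2$ to dimension $1$, so for the nearby germ at $p_s$ you may only assume $\dim Q_s^\perp\geq1$. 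This is enough: what is invariant is that the common tangent direction together with the differences of the second-order coefficients of the branches span only a plane, whence the span of all the $2$-jets has dimension at most $3$ and its annihilator in $(\C^4)^*$ is nonzero; since your computation for the split $(1,2)$ forces $\ell^{(0)}=0$, any nonzero limit still yields the contradiction. With these two adjustments the proposal is a complete and rather more uniform proof than the one in the paper.
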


\begin{proof}
Suppose first that such a curve deforms to $L_4^2$.
The component $M_3$ deforms only to three smooth branches
spanning 3-space, so both components have to deform to two
smooth branches, and the two smooth branches, into which $M_3$
deforms, are tangent to the plane of the cusp. Then the last component
of the parametrisation has the form
$\vp_4(t,s)=(t-a_s)^2(t-b_s)^2\psi_i(t,s)$, but $\vp_4(t,0)=t^3$.

If the curve deforms to $L_3^1$, then the cusp deforms
into a smooth branch and $M_3\to A_3$ is a $\delta$-constant 
deformation. The equation $z_1=0$ of $M_3$ deforms into 
$z_1+sf(z_1,\dots,z_4)=0$ and the first component of
the parametrisation of $A_2$ is $\vp_1(t,s)=t^2+s\psi_i(t,s)$.
The intersection multiplicity of the smooth branch and $A_3$ is at
most the order in $t$ of $\vp_1+sf(\vp_1,\dots,\vp_4)$, so at most 2.
Therefore the three smooth branches form the simple singularity
$J_{2,0}(2)$, with $\delta=4$.
\end{proof}

\subsubsection{$A_2\,\cup\, A_4$}
Such a curve is not simple if the tangent line
of $A_4$ lies in the plane of the cusp $A_2$, as it then
deforms to $L_4^2$. If the tangent line of the cusp lies
in the plane of the $A_4$, then there is deformation to 
$L_3^1$. The curve $T_9=(2,3,\n-)+(\n-,5,2)$ is a deformation
of $Z_{10}=(4,6,7)$. It deforms into $A_2\vee A_4$.
The parametrisation of  $A_2\vee A_4  \vee L$ is not simple.

\subsubsection{$A_2\,\cup\, A_2$}
All possibilities
for the intersection line $T$ yield simple curves.
The curve $Z_{10}$ deforms into $Z_9=(2,3,\n-)+(2,\n-,3)$.
A deformation of the parametrisation gives the curve
$Z_9(1)=(2,3,\n-,\n-)+(2,\n-,3,4)$ with $\delta=4$. It deforms 
$\delta$-constant
into $T_7^*=(2,3,\n-)+(3,\n-,2)$ and then into
 $T_7=(3,2,\n-)+(3,\n-,2)$. By a deformation of the
parametrisation we obtain  $A_2\vee A_2$.
The curves here and of the previous paragraph are listed in \ref{twoAk}.

\subsubsection{$A_2\cup A_2\cup L$}
The curve $Z_9(1)\vee L$ deforms into $L_3^1$. The curve
consisting of $A_2\cup A_2$ and a smooth branch is not
simple if the smooth branch is tangent to the plane spanned
by the tangent lines of the cusps, for then there  is again a
deformation to $L_3^1$.
The branch is also not tangent to the plane of one of the cusps,
as $A_2\vee D_4$ is not simple, deforming into $L_4^2$.

The singularity $T_7^*\vee L$ is a deformation of
$W_8^*\vee L=(4,5,7)\vee L$,
as $W_8^*\to T_7^*$ \cite{FK}: use the parametrisation
$(t^2(t-s)^2,  t^3(t-s)^2, t^4(t-s)^3)$.

The curves $A_2\cup A_2\cup L$
in to which the parametrisation of 
$T_7^*\vee L$ deforms are listed in \ref{twocuspsline}.

\subsection{At most one component of multiplicity two}
If there are only smooth branches it can happen that some branches
have the same tangent line. As $A_3\vee A_3$ is not simple, this 
can happen only for one direction. The curve $J_{2,0}(2)$ consisting
of three smooth branches is a deformation of $J_{2,1}(2)=
(2,5,\n-)+(1,\n-,2)$.  The curve $J_{2,0}(2)\vee L$
deforms into $L_4^2$. So if the curve has at least four branches,
only two of them can be tangent.
\subsubsection{Curves containing  an $A_k$, $k\geq 3$}
As $A_3\vee L_{n}^{n-1} \to L_{n+2}^{n} $, the lines in a curve,
consisting of an $A_k$ ($k\geq 3$) and $n$ lines,  form an 
$L_{n}^{n}$.
The intersection of the space spanned by this $L_{n}^{n}$ 
with the tangent
plane of the $A_k$ is at most 1-dimensional. If it is a line, this line is
not tangent to the $A_k$, for otherwise there is  a deformation of the
curve into $ L_{n+2}^{n} $. So we can take the line
to be a coordinate axis, and get the normal form
listed above (\ref{dvl}), see also \cite[Example 2-14]{st-c}.
Note that for $n=1$ we have $D_{k+3}$. Any curve of this type
is a deformation of $D_{k+3}\vee L_{n-1}^{n-1}$.

\begin{prop}
The curves  $D_{k+3}\vee L_{n}^{n}$ have simple parametrisations.
\end{prop}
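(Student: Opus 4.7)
The approach mirrors that of Proposition~\ref{prope8}: we show that the parametrisation of $D_{k+3}\vee L_n^n$ admits no deformation into a confining singularity $L_m^{m-2}$, whence simpleness follows from Theorem~\ref{confine}. First I would apply the reduction from Proposition~\ref{prope8}: if some of the deformed lines of $L_n^n$ fail to pass through the singular point of the target, one extracts a deformation to a smaller confining singularity, so we may assume that the $n$ lines of $L_n^n$ are undeformed. Upper semi-continuity of multiplicity then restricts the target to $L_{n+2}^n$ or $L_{n+3}^{n+1}$, since $D_{k+3}\vee L_n^n$ has multiplicity $3+n$ while $L_m^{m-2}$ has multiplicity $m$. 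Write $V_L\subset\C^{n+2}$ for the $n$-dimensional span of $L_n^n$, $V_E$ for the complementary plane containing $D_{k+3}$, and $\pi\colon\C^{n+2}\to V_E$ for the projection along $V_L$.

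Suppose $D_{k+3}\vee L_n^n\rightharpoondown L_{n+2}^n$. The tangent cone of the deformed image consists of $n+2$ lines spanning an $n$-dimensional subspace; the preserved $L_n^n$ already contributes $n$ lines spanning $V_L$, forcing the other two tangent lines into $V_L$ as well. Hence the two smooth branches into which $D_{k+3}$ deforms have tangents in $V_L=\ker\pi$, so each of their $\pi$-images has multiplicity at least two in $V_E$. The resulting planar deformation of the parametrisation of $D_{k+3}$ has total multiplicity at least four, contradicting upper semi-continuity against $D_{k+3}$'s multiplicity three, exactly as in the $E_8$ argument of Proposition~\ref{prope8}.

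Suppose $D_{k+3}\vee L_n^n\rightharpoondown L_{n+3}^{n+1}$. The tangent cone spans $W=V_L\oplus\C\cdot e$ with $e\notin V_L$, and general position of the $n+3$ tangent lines forces the three new tangent directions $v_i\in W$ to have nonzero component along $\C\cdot e$; consequently $\pi(v_1),\pi(v_2),\pi(v_3)$ are all nonzero scalar multiples of $\pi(e)$, so the three projected branches share a common tangent direction in $V_E$. But $D_{k+3}$ has at least two distinct tangent directions in $V_E$---the tangent of the $A_k$-component and that of the transverse line $L$---and continuity of the family of parametrisations forces the tangent of each deformed image branch to converge, as the parameter tends to zero, to the tangent of the source component it arises from: the $L$-component yields an image branch whose projected tangent is near the tangent of $L$, while the $A_k$-component yields image branches whose projected tangents are near the tangent of $A_k$. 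These cannot all be parallel to $\pi(e)$, a contradiction. The main obstacle is precisely this continuity claim about tangents under parametrisation deformation; once it is granted, both confining targets are ruled out and $D_{k+3}\vee L_n^n$ is simple.
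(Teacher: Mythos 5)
Your reduction and your treatment of the first case are sound and essentially the paper's: assume the $n$ lines undeformed, observe that only $L_{n+2}^n$ and $L_{n+3}^{n+1}$ can occur, and in the $L_{n+2}^n$ case get a contradiction from the multiplicity of the projection along $V_L$ onto the plane $V_E$ of $D_{k+3}$, which would jump above $3$. (The paper also reduces at the outset to $D_{2m+3}\vee L_n^n$, using the adjacency $D_{2m+3}\to D_{2m+2}$; this is harmless for your case 1 but relevant below.)

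The gap is exactly where you flag it, in the $L_{n+3}^{n+1}$ case: the claim that the tangent of each branch of the deformed image converges to the tangent of the source component it arises from is \emph{false} for the component $A_k$ when $A_k$ is irreducible (that is, for $k$ even, the only case the paper actually has to treat after its reduction to $D_{2m+3}$). A deformation of the parametrisation of a branch of multiplicity two can produce branches with arbitrary limiting tangent: already $\vp_s(t)=(t^2,t^3+st)$ is for $s\neq 0$ an immersion at $t=0$ whose tangent is the $y$-axis, transverse to the tangent of $A_2$; Example~\ref{AKexample} of the paper exhibits the same phenomenon for $(t^4,t^5)$. So the two branches into which $A_{2m}$ splits may well have projected tangents parallel to $\pi(e)$, and your contradiction evaporates precisely in the case that must be excluded. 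The paper's argument does not use limits of tangent directions of the singular branch. Instead: the two branches of the deformed $A_{2m}$ through the singular point have tangents in $W=V_L\oplus\ell$ but (by general position) not in $V_L$, so their projections to $V_E$ are singular or are smooth branches tangent to the fixed line $\ell$. Choosing coordinates on $V_E$ with $A_{2m}=(t^2,t^{2m+1})$ and $\ell=\{x=0\}$, in either alternative $x\circ\Phi_s$ acquires two critical points tending to $0$ --- equivalently, the intersection number of the projected multigerm with the line $\{x=\mathrm{const}\}$ through the singular point is at least $4$ --- while $x\circ\Phi_0=t^2$ has a single critical point and intersection number $2$; conservation of number gives the contradiction. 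Your tangent-continuity argument is correct for smooth branches, so it does dispose of the case where $A_k$ consists of two smooth branches ($k$ odd), but that case already follows from the even case via $D_{2m+3}\to D_{2m+2}$, and the even case is the one your proof does not reach.
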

\begin{proof}
It suffices to prove the statement for $D_{2m+3}\vee L_{n}^{n}$. 
Again
we have to exclude a deformation to $L_{n+2}^{n}$ or 
$L_{n+3}^{n+1}$.
In the first case the deformed line of $D_{2m+3}$ does not pass 
through the singular point, and in the second case we can assume
that this line and $L_{n}^{n}$ are unchanged. In both cases the 
$A_{2k}$ in $D_{2k+3}$ deforms into two smooth branches, whose
projection onto the plane is singular or always tangent to the line
in $D_{2k+3}$, again impossible.
\end{proof}

\subsubsection{Curves containing  an $A_2$}
If two smooth branches have the same tangent, then there are 
no more smooth branches ($A_3\vee A_2\vee L$ is not simple).
The curve $A_2\vee A_5$ is not simple, as it deforms to $L_3^1$.
For $A_2\vee A_3$ the smooth curves cannot be tangent to the plane
of the cusp: there would be a deformation to $L_4^2$. 
The tangent line of the cusp cannot lie in the plane of $A_3$, 
otherwise there is a deformation to $L_3^1$.
The curve
$T_8=(2,3,\n-)+(\n-,\n-,1)+(\n-,2,1)$ is a deformation of $T_9$.

As $A_2$ deforms by deforming the parametrisation into
a smooth branch with arbitrary tangent, the $n$ lines
in a curve containing $A_2$ form a $L_{n}^{n}$. 
Let $T$ be the intersection
of the tangent plan of the $A_2$ with the  space spanned by the 
$L_{n}^{n}$.
If the curve is indecomposable, then $T$ is a line. If $T$ is transverse
to the cusp, then we get the same type of normal form
as for higher $A_k$.  But $T$ can also be tangent to the cusp.
For $n=1$ we have $E_7$ and, by bending the line out of the 
plane, also $E_7(1)$. If there are more lines, and the cusp
is tangent to one of the lines of $L_{n}^{n}$, then we have
$E_7\vee L_{n-1}^{n-1}$, $E_7(1)\vee L_{n-1}^{n-1}$ and also
curves obtained by bending the line out of the plane in the direction
of  $ L_{n-1}^{n-1}$. 
If the cusp is not tangent to one of the lines, we take a normal
form where $T$ is a coordinate axis.

All curves considered here are deformations of $E_7\vee L_{n-1}^{n-1}$.
This curve is simple, as it occurs in the versal deformation
of the parametrisation 
of  the curve $E_8\vee L_{n-1}^{n-1}$, which is simple by
Proposition \ref{prope8}.

\section{Plane curve singularities}
In this section we show that in the case of plane curves 
a parametrisation is simple if and only if its image is a simple
curve. This fact was already observed by Zhitomirsky \cite{zh}
as result of the classification. Here we give a direct argument.
It is based on the characterisation of simple plane curve
singularities given by Barth, Peters and Van de Ven
\cite[Section II.8]{BPV}.

\begin{theorem}
A plane curve singularity is simple if and only if
its multiplicity is at most three and in each step of the embedded
resolution the multiplicity of the (reduced) total transform
is at most three.
\end{theorem}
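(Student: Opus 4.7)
The plan is to prove both implications by a classification argument, leveraging Arnold's list of simple plane curve singularities.

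For the direction $(\Leftarrow)$, assuming multiplicity at most three at every step of the embedded resolution, I would classify all such singularities and verify directly that they are exactly the A-D-E curves. Multiplicity two at the origin yields the $A_n$-series, since the strict transform stays smooth after each blow-up and touches the exceptional divisor with successively decreasing tangency order, giving a Newton pair $\{2,2k+1\}$. Multiplicity three at the origin splits by tangent cone: three distinct tangent lines give $D_4$; two distinct tangent lines (where the higher-multiplicity tangent is shared by a smoother branch) give the $D_{n\geq 5}$-series by induction on the blown-up curve; and a single triple tangent leads through further blow-ups (with the multiplicity-$\leq 3$ constraint enforced at every step) to exactly $E_6$, $E_7$, $E_8$. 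These are known to be simple by Arnold's classification.

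For $(\Rightarrow)$, I would argue by contrapositive: if the reduced total transform has a point of multiplicity $\geq 4$ at some step $k$ of the resolution, the original curve admits a deformation with continuous moduli. The prototype is $X_9$ (four concurrent lines), whose cross-ratio of tangent directions is a projective invariant not absorbable by coordinate changes; more generally, a point of multiplicity four on the reduced total transform produces at most four branches meeting there (possibly infinitely near), and the cross-ratio of their tangent directions at that point survives as a modulus. I would then lift this one-parameter family of infinitely near configurations back through the $k$ blow-downs to a family of plane curves deforming the original.

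The main obstacle will be the forward direction: producing an explicit moduli-bearing deformation from a multiplicity-four configuration at an arbitrary infinitely near point, and verifying rigorously that the modulus cannot be absorbed by an ambient coordinate change after blowing down. One delicate sub-case is when the multiplicity-four configuration is formed by fewer than four branches (e.g.\ a branch of multiplicity $\geq 2$ together with others), where the continuous modulus instead comes from the relative position of the branches; here the analysis must be subdivided into the unimodal families $X_{1,0}$, $J_{1,0}$, and $Y_{1,0}^{p,q}$ in Arnold's list, each of which realises the required confining family. The backward direction, by contrast, reduces to a straightforward bookkeeping on the dual graph of the resolution.
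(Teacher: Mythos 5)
Your overall two-direction structure matches the paper's, and for the forward direction you have correctly located the source of the modulus (a cross-ratio at a point of multiplicity four on the reduced total transform). But the step you yourself flag as ``the main obstacle'' is precisely the crux, and you leave it open. The paper closes it in one move: first deform the \emph{parametrisation} so that the offending infinitely near point becomes an \emph{ordinary} multiple point (pairwise transverse smooth local branches, the exceptional components included among them); for plane curves a deformation of the parametrisation always induces a deformation of the image, and the non-absorbability of the cross-ratio after blowing down is then immediate, because a trivialising coordinate change downstairs would lift to one of the ordinary multiple point on the blow-up, where the cross-ratio is an invariant. This single observation also makes your proposed subdivision into the unimodal families $X_{1,0}$, $J_{1,0}$, $Y^{p,q}_{1,0}$ unnecessary: you never need to identify which confining family you land in, only to exhibit one unabsorbable parameter.

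For the converse, your route (classify all germs satisfying the multiplicity condition, recognise them as A--D--E, quote Arnold) is workable but genuinely different from, and heavier than, the paper's, and it is not quite ``straightforward bookkeeping'': you must track the multiplicity of the \emph{reduced total transform}, exceptional curves included, at every stage --- this is what excludes, for instance, the unibranch germ $(3,7)$ (type $E_{12}$, unimodal), whose strict transform after one blow-up is a multiplicity-three $E_6$-point lying on the exceptional curve, hence a point of multiplicity four on the total transform. The paper avoids any appeal to Arnold's classification by inserting the hypothesis directly into Wall's formula for the right-equivalence modality in terms of the multiplicities $m_P$ at all infinitely near points, the number of branches and the number of satellite points, and checking in a few lines that the modality is $0$ whenever all $m_P\leq 3$. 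That computation is self-contained and yields simplicity without first producing normal forms; the list of multiplicity sequences then comes for free afterwards. If you keep your classification route, you should at least make explicit that Arnold's theorem (simplicity of A--D--E) is being invoked as an external input.
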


\begin{proof}
If there is a point on the total transform of multiplicity
at least four, then by a deformation of the parametrisation 
of the curve we can achieve that it is an ordinary multiple point.
Then the blown-down deformed curve has moduli,  as a 
trivialising coordinate transformation downstairs would lift
to one of the ordinary multiple point on the blow-up.

For the converse we use a formula of Wall for the modality (for
right equivalence) in terms of the multiplicity sequence
of plane curve singularities  \cite[Theorem 8.1]{wa-no}:
\[ 
\moda (C)=\sum_P {\textstyle\frac12}(m_P-1)(m_P-2)-r-s+2\;,
\]
where the sum runs over all infinitely near points in a large
enough embedded resolution, $r$ is the number of branches and
$s$ the total number of satellite points.
If the multiplicity of the singularity is two, then $\moda(C)=0$: if
$r=1$ there is at least one satellite point. For multiplicity
three the strict transform has no point of multiplicity three.
If $r=2$ there is again at least one satellite point.
In the case of one branch, if the strict transform on the first 
blow-up is smooth, there are two satellite points. The remaining
possible multiplicity sequence is $(3,2,1,1,\dots)$ with
two satellite points. So again $\moda(C)=0$.
\end{proof}

\begin{cor} 
The  parametrisation of a plane curve is simple if and only if the 
curve is simple.
\end{cor}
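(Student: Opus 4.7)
The plan is to read the corollary as a direct consequence of the theorem just proved, using the special relationship for plane curves between deformations of the parametrisation and deformations of the image curve. Both directions are quite short once the theorem is granted, but they rely on different inputs.

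For the implication simple parametrisation $\Rightarrow$ simple curve I would argue contrapositively. Suppose the curve is not simple. By the theorem, at some stage of the embedded resolution the reduced total transform carries a point of multiplicity $m\geq 4$. The proof of the theorem already exhibited a deformation of the parametrisation that moves this infinitely near point into an ordinary $m$-fold point, and observed that this produces moduli: any analytic coordinate change downstairs that would trivialise the family would have to lift to one that trivialises the family of ordinary $m$-fold points on the blow-up, which is impossible for $m\geq 4$. Hence the parametrisation is not simple.

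For the converse simple curve $\Rightarrow$ simple parametrisation I would invoke Teissier's theorem, used throughout the paper: every deformation of the parametrisation of a plane curve is a $\delta$-constant deformation of its image. Thus the versal deformation of the parametrisation $\vp$ induces a map from its base into the $\delta$-constant stratum of the versal deformation of the image curve $C$. If $C$ is simple, only finitely many analytic isomorphism classes of plane curve germs appear in any deformation of $C$; a fortiori, only finitely many appear in the induced family. Since for plane curves the normalisation recovers $\vp$ uniquely up to $\cA$-equivalence from the image curve, finitely many isomorphism classes of image curves correspond to finitely many $\cA$-equivalence classes of parametrisations in the versal deformation of $\vp$. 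Hence $\vp$ is simple.

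The main obstacle is a bookkeeping one rather than a deep one: one must be careful that the map from the versal deformation of the parametrisation into deformations of the image curve really respects isomorphism classes (so that finiteness on the curve side pulls back to finiteness on the parametrisation side), and, in the other direction, that the argument producing moduli from a multiplicity-$\geq 4$ point on the total transform genuinely produces moduli of parametrisations rather than merely moduli of functions. Both points are taken care of by the proof of the theorem together with the uniqueness of the normalisation of a plane curve germ.
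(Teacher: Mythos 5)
Your proposal is correct and follows essentially the same route as the paper: the forward direction via the deformation of the parametrisation to an ordinary point of multiplicity at least four constructed in the proof of the preceding theorem, and the converse via the fact that every deformation of the parametrisation of a plane curve induces a ($\delta$-constant) deformation of the image. The extra bookkeeping you flag (uniqueness of normalisation, so finitely many image classes give finitely many parametrisation classes) is correct and is left implicit in the paper.
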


\begin{proof}
For plane curves any deformation of the parametrisation gives
a deformation of the image, so simpleness of the image
implies simpleness of the parametrisation. Conversely, if the curve is 
not simple, then by the above proof the adjacency to a 
singularity with moduli can be realised by a deformation of
the parametrisation.
\end{proof}

We classify the possible multiplicity sequences.
They are given in Table \ref{multsequence}.
As the singularities in question have no moduli for
right equivalence, it suffices to find one parametrisation for each
sequence. This can be done using an explicit description of the charts
of the blow-up.

\begin{table}
\caption{Multiplicity sequences for simple plane curve}
\label{multsequence}
\newlength{\flen}\setlength{\flen}{\fboxrule}
\addtolength{\flen}{\fboxsep}
\def\myfbox#1{\fbox{\begin{tabular}{c} #1\end{tabular}}}
\def\mytbox#1{\begin{tabular}{c} #1\end{tabular}}
\def\mytfbox#1{\hspace{\flen}%
\begin{tabular}{c} #1\end{tabular}\hspace{\flen}}
\def\lpl{--- $\cdots$ ---} \def\lp{--- $\cdots$}
\begin{align*}
A_{2k-1}\colon \qquad&  
\myfbox{1\\1}%
\mytbox{\lpl\\ \lpl}%
\myfbox{1\\1}%
\mytbox{--- 1 \lp\\--- 1 \lp}\\
A_{2k}\colon \qquad &   
\mytfbox{2}\mytbox{\lpl}\mytfbox{2}\mytbox{--- 1 \lp}
\\[2mm]
D_{2k}\colon\qquad&
\myfbox{1\\[\flen]1\\1}
\mytbox{---\\[\flen]---\\---}
\begin{tabular}{c@{\hspace{\flen}}l}
\mytfbox{1}&\mytbox{\lp}\\[\flen]
\myfbox{1\\1}&\mytbox{\lpl\\ \lpl}
\myfbox{1\\1}
\mytbox{--- 1 \lp\\--- 1 \lp}
\end{tabular}
\\
D_{2k+1}\colon\qquad&
\myfbox{1\\[\flen]2}
\mytbox{---\\[\flen]---}
\begin{tabular}{c@{\hspace{2\flen}}l} 
\mytfbox{1}&\mytbox{\lp}\\
\mytfbox{2}&\mytbox{\lpl}
\mytfbox{2}
\mytbox{--- 1 \lp}
\end{tabular}
\\[2mm]
E_6\colon \qquad&  
\mytfbox{3}
\mytbox{---}
\mytfbox{1}
\mytbox{--- 1 \lp}
\\
E_7\colon \qquad&  
\myfbox{1\\2}
\mytbox{---\\ ---}
\myfbox{1\\1}
\mytbox{--- 1 \lp\\--- 1 \lp}
\\
E_8\colon \qquad&  
\mytfbox{3}%
\mytbox{---}%
\mytfbox{2}%
\mytbox{--- 1 \lp}
\end{align*}
\end{table}

Using deformations on the blow up we can also easily establish that
the confining singularities are $\wt E_7\colon x^4+ax^2y^2+y^4=0$ 
and $\wt E_8\colon x^3+axy^4+y^6=0$. For instance, if
the  strict transform has a point
of multiplicity three lying on an exceptional curve, then we deform it 
into an ordinary triple point. Blowing down the exceptional curve 
gives a singularity of type $\wt E_7$, which we can move off the
exceptional curve,  resulting in a deformation of the original singularity
into $\wt E_7$.

\section{Simple curves}
For plane curves we showed without using the classification that the
curves with simple parametrisation are exactly the simple curves for
contact equivalence and even right equivalence of the defining equations.

Also for space curve singularities (in $\C^3$) both concepts
of simpleness coincide, as a comparison of the lists of Giusti
\cite {gi} and Fr\"uhbis-Kr\"uger \cite{FK} with the space curves 
in our list shows; in fact, the comparison of the lists of simple curves
with the list of  Zhitomirskii \cite{zh} exposes some inaccuracies there,
like the inclusion of the confining singularity $T_{10}^*\colon 
xy=x^3+y^6+z^2=0$  \cite[I \S9.8]{AGV}, which deforms into $\wt E_8$.
In Table \ref{tablespacecurves} in the Appendix  we list the
indecomposable simple curves. 
The list of confining singularities for flat deformations
of the curve is  longer than for parametrisations, for complete
intersections see \cite[I \S9.8]{AGV} and for determinantal curves
\cite[Table 1]{FK}.

The minimal $\delta$-invariant for a confining singularity for
parametrisations is $\delta=5$. Therefore the list of all simple
parametrisations contains all curves with $\delta\leq 4$. By the
semi-continuity of $\delta$ we find the following corollaries of the
classification.

\begin{cor}
Every curve singularity with $\delta\leq4$ has a simple parametrisation
and it is also simple as curve.
\end{cor}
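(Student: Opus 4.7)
The plan is to combine the classification (Theorems \ref{thmclass} and \ref{confine}) with the upper semi-continuity of the $\delta$-invariant, which holds both in deformations of the curve and in deformations of the parametrisation (in the latter case one sees this from the trivial behaviour of $\sier {\wl C}$ in a parametrisation deformation; the standard example is the splitting of $A_1$ into two skew lines, where $\delta$ drops from $1$ to $0$ and never rises).

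First I would prove the statement on parametrisations. Let $C$ satisfy $\delta(C)\le 4$, and suppose for a contradiction that its parametrisation is not simple. By Theorem \ref{confine} the parametrisation then deforms to some confining singularity $L_n^{n-2}$. However, as recorded at the start of this section, every confining singularity has $\delta\ge 5$. Since $\delta$ cannot increase in a deformation of the parametrisation, the inequality $5\le\delta(L_n^{n-2})\le\delta(C)\le 4$ is absurd. Hence the parametrisation of $C$ is simple.

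For the second assertion I would argue that the versal deformation of $C$ as a curve realises only finitely many isomorphism types. Upper semi-continuity of $\delta$ for flat deformations of the curve gives $\delta(C_t)\le\delta(C)\le 4$ for every nearby fibre $C_t$. By what was just proved, each such $C_t$ has a simple parametrisation, hence by Theorem \ref{thmclass} it appears in the list of Section~4. It suffices to remark that this list contains only finitely many curves with $\delta\le 4$: the sporadic portion is finite to begin with, while in each of the infinite series $A_k\vee L_n^n$, $D_k\vee L_n^n$, $E_k\vee L_n^n$ the $\delta$-invariant grows monotonically in both parameters, so only finitely many members lie below the threshold $\delta\le 4$. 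Consequently the versal deformation of $C$ realises finitely many isomorphism classes, so $C$ is simple as a curve.

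The only really delicate point is the bound $\delta\ge 5$ for every confining singularity. This is the one place where an explicit computation is needed. For $L_n^{n-2}$ with $n\ge 5$ one can obtain it from the relation $\mu=\delta+2$ noted in Section 2 for $L_{n+2}^n$, $n\ge 3$, while for the two exceptional cases $L_3^1$ and $L_4^2$ it is checked directly from the parametrisations and the determinantal equations (\ref{l31}) and (\ref{l42}). Once this lower bound on $\delta$ is secured, everything else is a bookkeeping exercise on the classification list.
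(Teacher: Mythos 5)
Your proposal is correct and follows essentially the same route as the paper, which derives this corollary directly from the two facts you isolate: every confining singularity for parametrisations has $\delta\geq 5$, and $\delta$ is upper semi-continuous both for deformations of the parametrisation and for flat deformations of the curve, so that all neighbouring singularities again have $\delta\leq 4$ and lie in the (finite, for bounded $\delta$) list of the classification. Your extra care in checking $\delta(L_3^1)=\delta(L_4^2)=5$ and the finiteness of the sub-list with $\delta\leq4$ only makes explicit what the paper leaves implicit.
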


\begin{cor}
A parametrisation of a curve singularity with $\delta=5$ is simple if and only if the curve is simple.
\end{cor}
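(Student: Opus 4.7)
The plan is to leverage two ingredients: Teissier's theorem equating $\delta$-constant deformations of the parametrisation with flat deformations of the image curve (\cite[II.2.6]{GLS}, already invoked in the paper), and the preceding corollary which handles the range $\delta\leq 4$. Since $\delta$ is upper semi-continuous in either deformation problem, every fibre of either versal deformation of $C$ satisfies $\delta\leq 5$. The previous corollary, together with the observation that the classification produces only finitely many isomorphism classes of simple germs once $\delta$ is bounded (each infinite series $A_k\vee L_n^n$, $D_k\vee L_n^n$, $E_k\vee L_n^n$ has $\delta\to\infty$ with its parameters), shows that fibres with $\delta\leq 4$ contribute only finitely many isomorphism classes to either versal family.

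For the implication that a simple parametrisation forces a simple curve I would argue by contrapositive. If $C$ is not simple as a curve, the versal deformation of $C$ exhibits infinitely many isomorphism classes; by the finiteness above, infinitely many of these fibres must have $\delta=5$. Each is a $\delta$-constant flat deformation of $C$, which by Teissier lifts to a deformation of the parametrisation $\vp$, so these families sit inside the versal deformation of $\vp$ and witness infinitely many non-isomorphic parametrisations. Hence $\vp$ is not simple. The reverse implication is symmetric: if $\vp$ is not simple, infinitely many of its deformations still have $\delta=5$, and since those are $\delta$-constant they descend, again by Teissier, to flat deformations of the image $C$, producing infinitely many non-isomorphic curve fibres and contradicting simplicity of $C$.

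The main obstacle is the finiteness claim that simple germs with $\delta\leq 4$ form finitely many isomorphism classes. Within a fixed embedding dimension this is immediate from the list of Section~4, but because deformations of a parametrisation can raise the target dimension one must argue stably. This is where I would invoke the fact, already used in the paper, that stable simpleness and simpleness coincide, so that $\delta\leq 4$ still carves out finitely many classes from the classification of (stably) simple parametrisations. Once this bookkeeping is in place, the Teissier correspondence does all the real work and both directions of the equivalence follow cleanly.
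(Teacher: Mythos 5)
Your argument is correct and follows essentially the same route the paper intends: upper semi-continuity of $\delta$ in both deformation problems forces every neighbouring germ to have $\delta\leq 5$, the preceding corollary together with the classification gives finiteness of the classes with $\delta\leq 4$, and Teissier's criterion identifies the remaining $\delta$-constant ($\delta=5$) deformations of the parametrisation with flat deformations of the image, transporting non-simpleness in both directions. The paper leaves this reasoning implicit (the corollary is stated as following ``by the semi-continuity of $\delta$'' from the classification), so your write-up just makes the intended proof explicit, including the correct observation that the finiteness for $\delta\leq 4$ is unaffected by the possible increase of target dimension.
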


\begin{prop}
The sporadic curves with simple parametrisations are also
simple as curve.
\end{prop}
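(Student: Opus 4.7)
The plan is to observe that the proposition is essentially a bookkeeping consequence of the two preceding corollaries, once we record the $\delta$-invariants of the sporadic curves in the list.

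First I would recall from the heading of Section 4.1 that for every sporadic curve with simple parametrisation the $\delta$-invariant satisfies $\delta\leq 5$, and that the maximally singular sporadic curves (such as $(5,6,7,8,9)$, $(4,6,7,9)$, $A_2\vee A_4$, $A_2\vee M_3$, and the three-branch curves of \ref{twocuspsline}) are exactly those realising $\delta=5$. This case split is the only structural input needed.

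Next I split the sporadic list according to this invariant. For a sporadic curve $C$ with $\delta(C)\leq 4$, the first corollary directly asserts that $C$ is simple as a curve, independently of the hypothesis on the parametrisation. For a sporadic curve $C$ with $\delta(C)=5$, the second corollary states that simplicity of the parametrisation is equivalent to simplicity of the curve; since by Theorem \ref{thmclass} the parametrisation of $C$ is simple, the curve $C$ is simple as well.

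There is no real obstacle; the work has already been done in establishing the two corollaries (which in turn rest on the upper semi-continuity of $\delta$, on the fact that confining parametrisation-singularities satisfy $\delta\geq 5$, and, for the $\delta=5$ case, on Teissier's theorem that a $\delta$-constant deformation of the image lifts to a deformation of the parametrisation). The only point worth verifying carefully when writing out the argument is that every curve appearing in Section 4.1 has been assigned the correct $\delta$, which can be read off the explicit parametrisations together with the elementary formula $\delta(C_1\vee C_2)=\delta(C_1)+\delta(C_2)$ applied to the decomposable members of the list.
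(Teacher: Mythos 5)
Your proposal is correct and is essentially the paper's own argument: the paper's proof is the single line ``A sporadic curve has $\delta\leq5$,'' which implicitly invokes exactly the two preceding corollaries you cite (simplicity for $\delta\leq4$, and equivalence of the two notions of simplicity at $\delta=5$ combined with the already-established simplicity of the parametrisation). You have merely written out the case split that the paper leaves implicit.
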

\begin{proof}
A sporadic curve has $\delta\leq5$.
\end{proof}

This partly explains the coincidence of lists. The series of simple
parametrisations are closely related to  $A_k\vee L$, 
$D_k\vee L$ and $E_k\vee L$. 
In fact, this holds in any embedding dimension.
We expect that our list gives the simple singularities.

\begin{conj}
The simple reduced curve singularities are exactly those with
simple parametrisation.
\end{conj}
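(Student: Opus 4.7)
My plan is to prove both directions by combining the classification of simple parametrisations (Theorem \ref{thmclass}) with an analysis of flat deformations of the image curve. The two corollaries preceding the conjecture in Section 7 handle all cases with $\delta \le 5$, and Proposition 7.4 handles the sporadic entries of the classification list; the essential content remaining in the conjecture is therefore the equivalence for the three infinite series $A_k \vee L_n^n$, $D_k \vee L_n^n$ and $E_k \vee L_n^n$ in embedding dimension $n+2 \ge 4$.

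\textbf{Forward direction (simple parametrisation $\Rightarrow$ simple curve).} The approach is induction on $k+n$, with the plane-curve case of Section 6 as the base. Assume all entries of the list strictly below $A_k \vee L_n^n$ (respectively $D_k \vee L_n^n$, $E_k \vee L_n^n$) in the parametrisation adjacency poset are already known to be simple as curves. Given a flat deformation $\{C_t\}$ of $C$, split into two cases: if the family is $\delta$-constant, then by \cite[II.2.6]{GLS} it lifts to a parametrisation deformation whose nearby fibres lie in the classified list and are simple by induction; if $\delta$ drops, then using the wedge structure of $C$, upper semi-continuity of $\delta$, and an analysis of which wedge components can merge, one sees that the generic fibre is a wedge of strictly smaller singularities of the same series type. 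Either way the versal flat deformation of $C$ contains only finitely many isomorphism classes.

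\textbf{Reverse direction (simple curve $\Rightarrow$ simple parametrisation).} The contrapositive: suppose the parametrisation of $C$ is non-simple. By Theorem \ref{confine}, some parametrisation deformation specialises to $L_n^{n-2}$, which itself has positive modality, since for $n \ge 3$ the $n+2$ lines form a generic configuration in $\P^{n-1}$ whose cross-ratios give a continuous invariant. The goal is to transport this modulus into the versal flat deformation of $C$. The plan is to consider the ideal family generated by the image of the parametrisation deformation and analyse its flat limit, which as in the Introduction's example of pulling apart $A_1 \subset \C^3$ differs from $C$ only by embedded structure at the singular point; removing this embedded structure should produce a family of reduced curves through $C$ in which the cross-ratio modulus of $L_n^{n-2}$ is still visible, contradicting simpleness of $C$ as curve.

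\textbf{Main obstacle.} The reverse direction is the genuine difficulty. Parametrisation deformations to $L_n^{n-2}$ are never $\delta$-constant, so the Teissier correspondence \cite[II.2.6]{GLS} does not apply, and the $A_1 \subset \C^3$ example of the Introduction shows that a non-trivial parametrisation deformation may fail to induce any flat deformation of the reduced image at all. Showing that the cross-ratio modulus of $L_n^{n-2}$ genuinely descends to a modulus of the curve, and is not absorbed entirely into the choice of embedded component at the origin, seems to require some substitute for the absent re-flattening theorem in this generality. This is intimately connected to the long-standing open question of whether rigid reduced curve singularities exist in embedding dimension $\ge 4$: any such singularity would be a direct counterexample to the conjecture and would simultaneously obstruct the re-flattening argument. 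In the absence of a multiplicity criterion analogous to \cite[II.8]{BPV} or an independent classification of simple curves beyond embedding dimension $3$, no obvious shortcut is available, and this is why the statement remains conjectural.
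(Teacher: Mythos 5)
The statement you are trying to prove is explicitly a \emph{Conjecture} in the paper: the author offers no proof, only the partial evidence of Section 7 (the two Corollaries covering $\delta\le 4$ and $\delta=5$, the Proposition that the sporadic curves are simple because $\delta\le 5$, and the Proposition that $L_n^n$, $A_2\vee L_k^k$, $A_3\vee L_k^k$ and $L_{n+1}^n\vee L_k^k$ are simple via semicontinuity of $\delta-r+1$). Your proposal correctly reassembles exactly this partial evidence, correctly isolates the infinite series and the reverse direction as the genuine open content, and correctly concludes that the statement remains conjectural. That honest conclusion is the right one; there is no complete proof here, and none exists in the paper to compare against.

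Two concrete points where your sketch goes wrong, beyond the admitted gap. First, your assertion that parametrisation deformations to $L_n^{n-2}$ are \emph{never} $\delta$-constant is false: the paper notes that $A_3\vee A_3$ deforms to $L_4^2$ with constant $\delta=5$, and more generally every confining singularity has $\delta\ge 5$, so for a curve with $\delta=5$ any adjacency to a confining singularity is automatically $\delta$-constant and hence \emph{does} descend to a flat deformation of the image --- this is precisely the mechanism behind the paper's Corollary that simpleness of curve and parametrisation agree when $\delta=5$. The re-flattening obstruction you describe only arises for curves with $\delta>5$, where the drop in $\delta$ genuinely blocks the Teissier correspondence. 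Second, in the forward direction the key step --- that a non-$\delta$-constant flat deformation of $A_k\vee L_n^n$ (or $D_k$, $E_k$) has generic fibre a wedge of strictly smaller singularities of the same series type --- is asserted, not proved; flat deformations of the image can merge branches, destroy the wedge structure, and produce curves far from the series (the deformation $A_3\to A_2$ already reduces the number of branches), so the induction is not well-founded as stated. Finally, note that a rigid reduced curve would contradict the conjecture only after one also verifies that its parametrisation is non-simple (which does follow, since every curve on the classification list visibly admits nontrivial deformations); your phrasing elides this step.
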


This implies in particular a negative answer to the 
old unsolved problem whether  rigid reduced
curve singularities exist. The deformation theory of curve
singularities of large codimension is complicated. There
exist non-smoothable curves. They are not simple: the argument 
that they are not smoothable, is that the
number of moduli is larger than the (computable)
dimension of a smoothing component, cf.~\cite{gr}.

\begin{prop}
The curves $L_n^n$, $A_2\vee L_k^k$, $A_3\vee L_k^k$
and $L_{n+1}^n\vee L_k^k$ are simple.
\end {prop}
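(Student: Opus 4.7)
My plan is to verify that each of the four families satisfies $\delta(C)\in\{r(C)-1,\,r(C)\}$, where $r(C)$ denotes the number of branches. Once this is done the proposition is immediate from the two results recalled in the introduction: by \cite[7.2.8]{BuGr} any reduced curve singularity with $\delta=r-1$ is simple, and by \cite{gr} any reduced curve singularity with $\delta=r$ is simple. Both facts rest on the upper semi-continuity of $\delta-r+1$, so there is nothing further to prove once the numerical input is in hand.

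The ingredients needed are the wedge formula and the value of $\delta(L_{n+1}^n)$. For the wedge, the short exact sequence
$$0\to \sier{C_1\vee C_2}\to \sier{C_1}\oplus \sier{C_2}\to \C\to 0,$$
with third map $(f_1,f_2)\mapsto f_1(0)-f_2(0)$, together with the identification $\sier{\wl{C_1\vee C_2}}=\sier{\wl{C_1}}\oplus \sier{\wl{C_2}}$, yields $\delta(C_1\vee C_2)=\delta(C_1)+\delta(C_2)+1$; additivity of $r$ is obvious. For $L_{n+1}^n$ I use that it is the affine cone over $n+1$ points in generic position in $\P^{n-1}$: these points impose independent conditions on forms of each degree $k\geq 2$ as soon as $n\geq 2$, so with respect to the natural grading $\sier{L_{n+1}^n}$ has Hilbert function $1,\,n,\,n+1,\,n+1,\ldots$, while $\sier{\wl C}=\bigoplus_{i=1}^{n+1}\C\{t_i\}$ has Hilbert function constant equal to $n+1$. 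Summing the degree-by-degree codimensions (namely $n$ in degree $0$ and $1$ in degree $1$) yields $\delta(L_{n+1}^n)=n+1$.

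The rest is arithmetic. For $L_n^n=L\vee\cdots\vee L$ ($n$ factors), iterating the wedge formula gives $\delta=n-1$ and $r=n$, so $\delta=r-1$. For $A_2\vee L_k^k$ we get $\delta=1+(k-1)+1=k+1$ and $r=1+k$, so $\delta=r$. For $A_3\vee L_k^k$ we get $\delta=2+(k-1)+1=k+2$ and $r=2+k$, so $\delta=r$. For $L_{n+1}^n\vee L_k^k$ we get $\delta=(n+1)+(k-1)+1=n+k+1$ and $r=(n+1)+k=n+k+1$, so again $\delta=r$. The first case invokes \cite[7.2.8]{BuGr}, the other three invoke \cite{gr}. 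The only genuinely non-trivial input is the computation of $\delta(L_{n+1}^n)$; once one sees that this is a cone over points in general position and invokes the standard Hilbert-function argument, everything else is elementary.
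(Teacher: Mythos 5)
Your proof is correct and takes essentially the same route as the paper: the paper's entire argument is that these are precisely the curves with $\delta-r+1\leq 1$ and that this quantity is upper semi-continuous, citing \cite{gr} and \cite{BuGr}. You merely make explicit the $\delta$-computations (wedge formula plus the Hilbert-function count for $L_{n+1}^n$) that the paper delegates to the classification in \cite{gr}.
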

\begin{proof}
These are the curves with $\delta-r+1\leq 1$ \cite{gr}, and 
$\delta-r+1$ is upper semi-continuous \cite{BuGr}.
\end{proof} 

Also the curves with  $\delta-r+1=2$ are classified, see \cite{st-c}.
The ones with moduli are not Gorenstein, so the Gorenstein curves
$A_{2}\vee L_{n-2}^{n-2}+ (\n-,1,1\dots,1) $
and $A_{3}\vee L_{n-2}^{n-2}+ (\n-,1,1\dots,1)$
are also simple.

\appendix
\section{Simple space curves}

In Table \ref{tablespacecurves} we list the 
indecomposable simple spaces curves
together with their names in the classifications by
Giusti \cite{gi} and Fr\"uhbis-Kr\"uger \cite{FK}. The equations 
are computed to agree with the parametrisations.
The decomposable simple space curves are $A_k\vee L$, 
$D_k\vee L$ and $E_k\vee L$.

\begin{table}
\caption{Indecomposable simple space curves}
\label{tablespacecurves}
$
\begin{array}{lll}
\toprule
\mbox{type}\qquad  & \mbox{parametrisation} & \mbox{equations} \\
\midrule
Z_{10}    &    (4,6,7)  &  y^2-x^3,\; z^2-yx^2\\
Z_9  &  (2,3,\n-)+(2,\n-,3)  &   y^2-x^3,\; z^2-x^3 \\ [1mm]
W_9  &  (3,4,5)+(1,\n-,\n-)  & y^2-xz,\; z^2-yx^2    \\ 
W_8^*  &(4,5,7)  &  \scalefont{0.80}{\arraycolsep=3pt
\begin{pmatrix} x&y&z\\z&x^2&y^2\end{pmatrix} }      \\
W_8   & (4,5,6)    &  
y^2-xz,\; z^2-x^3      \\ [1mm]
U_9    &   (3,5,7)+(\n-,\n-,1)  & y^2-xz,\; yz-x^4\\ 
 U_8   &(2,3,\n-)+(1,\n-,2)+(\n-,\n-,1) &zy,\; y^2-x^3+zx \\
 U_7^*  & (3,4,5)+(\n-,1,\n-)   &  \scalefont{0.80}{\arraycolsep=3pt
\begin{pmatrix} x&y&z\\z&x^2&xy\end{pmatrix} }   \\  
 U_7   &  (3,4,5)+(\n-,\n-,1)   & y^2-xz,\; yz-x^3\\  [1mm]
T_9    &   (2,3,\n-)+(\n-,5,2)   & xz,\; y^2-z^5-x^3    \\
T_8    &   (2,3,\n-)+(\n-,\n-,1)+(\n-,2,1) & xz,\; y^2-yz^2-x^3   \\
T_7^*    &   (2,3,\n-)+(\n-,2,3)   &  \scalefont{0.80}{\arraycolsep=3pt
\begin{pmatrix} x&y&z\\0&z&y^2-x^3\end{pmatrix} } \\
T_7    &   (2,3,\n-)+(\n-,3,2)    & xz,\; y^2-z^3-x^3    \\ [1mm]
 E_{12}(2)    &  (3,7,8)       &  \scalefont{0.80}{\arraycolsep=3pt
\begin{pmatrix} x^2&y&z\\y&z&x^3\end{pmatrix} }  \\
 J_{2,1}(2)  &  (2,5,\n-)+(1,\n-,2)    &  \scalefont{0.80}{\arraycolsep=3pt
\begin{pmatrix} z&y&x^3\\0&x^2-z&y\end{pmatrix} } \\
 J_{2,0}(2)  & (1,\n-,\n-)+(1,2,\n-)+(1,\n-,2)   &  \scalefont{0.80}{\arraycolsep=3pt
\begin{pmatrix} z&y-x^2&0\\0&x^2-z&y\end{pmatrix} }\\ [1mm]
 E_8(1) &(3,5,7)   &  \scalefont{0.80}{\arraycolsep=3pt
\begin{pmatrix} x&y&z\\y&z&x^3\end{pmatrix} } \\ 
 E_7(1) &(2,3,\n-)+(1,\n-,2)   &  \scalefont{0.80}{\arraycolsep=3pt
\begin{pmatrix} z&x&y\\0&y&x^2-z\end{pmatrix} } \\
 E_6(1) &(3,4,5)    &  \scalefont{0.80}{\arraycolsep=3pt
\begin{pmatrix} x&y&z\\y&z&x^2\end{pmatrix} } \\ [1mm]
S_{2k+3}  &  (1,\n-,\n-)+(1,k,\n-)+(\n-,\n-,1)+(\n-,1,1) 
          &xz,\; y^2-yx^{k}-yz  \\
S_{2k+4}  &  (2,2k+1,\n-)+(\n-,\n-,1)+(\n-,1,1) 
             &xz,\; y^2-x^{2k+1}-yz \\
S_{6}^*  &  (2,3,\n-)+(\n-,\n-,1)+(1,\n-,1) &  \scalefont{0.80}{\arraycolsep=3pt
\begin{pmatrix} z&x&y\\0&y&x^2-xz\end{pmatrix} } \\
\bottomrule
\end{array}
$
\end{table}

\end{document}